\theoremstyle{theorem}
\newtheorem{theorem}{Theorem}[section]
\newtheorem{proposition}{Proposition}[section]
\newtheorem{corollary}{Corollary}[section]
\newtheorem{lemma}{Lemma}[section]
\theoremstyle{definition}
\newtheorem{definition}{Definition}[section]
\newtheorem{example}{Example}[section]
\theoremstyle{question}
\newtheorem{question}{Question}
\theoremstyle{remark}
\newtheorem{remark}{Remark}
\begin{document}

\title[Sensitivity and transitivity of fuzzified dynamical systems]
{Sensitive dependence and transitivity of fuzzified dynamical systems}

\author[X. Wu]{Xinxing Wu}
\address[X. Wu]{School of Sciences, Southwest Petroleum University, Chengdu, Sichuan, 610500, People's
Republic of China}
\email{wuxinxing5201314@163.com}

\author[X. Wang]{Xiong Wang}
\address[X. Wang]{Institute for Advanced Study, Shenzhen University,Nanshan District Shenzhen, Guangdong, China}
\email{wangxiong8686@szu.edu.cn}
\thanks{Corresponding author: Xiong Wang (wangxiong8686@szu.edu.cn)}

\author[G. Chen]{Guanrong Chen}
\address[G. Chen]{Department of Electronic
Engineering, City University of Hong Kong, Hong Kong SAR, People's
Republic of China} \email{gchen@ee.cityu.edu.hk}
\subjclass[2010]{03E72, 37B99, 54H20, 54A40}
\date{\today}


\keywords{Zadeh's extension, $g$-Fuzzification, Set-valued dynamical system, Sensitivity, Transitivity. }
\maketitle
\begin{abstract}
This paper proves that a set-valued dynamical system is sensitively dependent on initial conditions
(resp., $\mathscr{F}$-sensitive, multi-sensitive)
if and only if its $g$-fuzzification is sensitively dependent on initial conditions
(resp., $\mathscr{F}$-sensitive, multi-sensitive), where $\mathscr{F}$ is a Furstenberg family.
As an application, it is shown that there exists a sensitive dynamical system whose $g$-fuzzification
does not have such sensitive dependence for any $g$ in a certain domain.
Moreover, a sufficient condition ensuring that the $g$-fuzzification
of every nontrivial dynamical system is not transitive is obtained.
These give an answer to a question posed in \cite[J. Kupka, Information Sciences,
{\bf 279} (2014): 642--653]{Kupka2014}.
\end{abstract}

\section{Introduction}

A {\it dynamical system} is a pair $(X, f)$,
where $X$ is a compact metric space with a metric $d$ and
$f: X\longrightarrow X$ is a continuous
map. Let $\mathbb{N}=\{1, 2, 3, \ldots\}$ and
$\mathbb{Z}^{+}=\{0, 1, 2, \ldots\}$. The complexity of a dynamical system has been a central topic of research since the term
of chaos was introduced by Li and Yorke \cite{Li75} in 1975, known as
{\it Li-Yorke chaos} today. An essential feature of chaos is the
impossibility of prediction of its long-term dynamics
due to the exponential separation of any two nearby bounded orbits.

Another interesting question about a dynamical system is when orbits from nearby
points start to deviate after finite steps. This is also one of the most important features
depicting the chaoticity of a system. This  concept has been widely studied and is termed as {\it sensitive
dependence on initial conditions} (briefly, {\it sensitivity}),
 detailed by Auslander and Yorke \cite{AY80} and  further popularized
by Devaney \cite{De89}. More precisely, a dynamical system $(X, f)$ is {\it sensitively dependent}
if there exists $\delta>0$ such that for any $x\in X$
and any $\varepsilon>0$, there exist $y\in B_{d}(x, \varepsilon):=\left\{z\in X: d(x, z)<\varepsilon\right\}$
and $n\in \mathbb{Z}^{+}$ satisfying $d(f^{n}(x), f^{n}(y))>\delta$.

In the rest of this Introduction, some notations are used without precise definitions, which will be given in the following section.
Given a dynamical system $(X, f)$, one can obtain two associated systems induced by $(X, f)$.
One is $(\mathcal{K}(X), \overline{f})$ on the hyperspace $\mathcal{K}(X)$ consisting
of all nonempty closed subsets of $X$ with the Hausdorff metric. The other is
its $g$-fuzzification system $(\mathbb{F}(X), \widetilde{f}_{g})$ on the space $\mathbb{F}(X)$
consisting of all upper semicontinuous fuzzy sets with a levelwise metric.
The notion of $g$-fuzzification was introduced by Kupka in \cite{Kupka2011-1}.

Afterwards, the connection of dynamical properties among $(X, f)$, $(\mathcal{K}(X), \overline{f})$
and $(\mathbb{F}(X), \widetilde{f}_{g})$ has been studied by many researchers.

Bauer and Sigmund \cite{BS1975} studied the interplay of chaos
in dynamical systems (individual chaos) with the corresponding set-valued versions
(collective chaos). Then, Banks \cite{Banks2005} proved that $f$ is weakly mixing
if and only if $\overline{f}$ is transitive, which is equivalent to
the weakly mixing property of $\overline{f}$ (also see \cite{Liao2006}).
Guirao et al. \cite{GKLOP} proved that $\overline{f}$ has the same type of chaos
as $f$ (distributional chaos, Li-Yorke chaos, $\omega$-chaos,
topological chaos, specification
property, exact Devaney chaos, total Devaney chaos). Hou et al. \cite{Hou2008}
showed that if $f$ is a non-minimal $M$-system, then $\overline{f}$ is sensitive. In
\cite{Gu2007}, Gu proved that the sensitivity of $\overline{f}$ implies that $f$ is also sensitive.
Lately, Liu et al. \cite{Liu} gave examples to show that the converse may
not hold, i.e., the sensitivity of $f$ does not necessarily imply the
sensitivity of $\overline{f}$, and they proved that if $f$ is a surjective
continuous interval map then the sensitivities of
$\overline{f}$ and $f$ are equivalent. Li \cite{Li2012} showed that the
multi-sensitivity of $f$ and $\overline{f}$ are equivalent properties.
Recently, we \cite{WWC} studied the sensitivity of $(\mathcal{K}(X), \overline{f})$
in Furstenberg families. In particular, we proved that $\mathscr{F}$-sensitivity
of $(\mathcal{K}(X), \overline{f})$ implies that of $(X, f)$, and the converse is also
true if the Furstenberg family $\mathscr{F}$ is a filter
\cite[Corollary 1, Theorem 4]{WWC}. For more recent results on the notion of sensitivity, one is referred to
\cite{HSZ15,HKZ15,HLY11,Mo2007,WWC,WC16,WOC} and some references therein.

Rom\'{a}n-Flores and  Chalco-Cano \cite{Roman2008} studied some chaotic properties (for example,
transitivity, sensitive dependence, periodic density) for Zadeh's extension
of a dynamical system. In \cite{Kupka2011}, Kupka investigated the relations
between Devaney chaos in the original system and in Zadeh's extension
system. Especially, he proved that Zadeh's extension is periodically dense in $\mathbb{F}(X)$
(resp. $\mathbb{F}^{\lambda}(X)$ for any $\lambda\in (0, 1]$) if and only if $\overline{f}$ is
periodically dense in $\mathcal{K}(X)$. Recently, Kupka \cite{Kupka2011-1} introduced
the notion of $g$-fuzzification which is a generalization of Zadeh's extension and proved
that a dynamical system is continuous if and only if its $g$-fuzzification system
is continuous. In \cite{Kupka2014}, he continued in studying chaotic properties
(for example, Li-Yorke chaos, distributional chaos, $\omega$-chaos, transitivity,
total transitivity, exactness, sensitive dependence, weakly mixing, mildly mixing, topologically mixing)
of $g$-fuzzification systems and showed that if the $g$-fuzzification
$(\mathbb{F}^{1}(X), \widetilde{f}_{g})$ has the property $P$, then $(X, f)$
also has the property $P$, where $P$ denotes the following properties: exactness, sensitive dependence,
weakly mixing, mildly mixing, or topologically mixing. Meanwhile, he posed the following question:

\begin{question}\cite{Kupka2014}\label{Question 1}
Does the $P$-property of $(X, f)$ imply the
$P$-property of $(\mathbb{F}^{1}(X), \widetilde{f}_{g})$?
\end{question}

In this paper, we further investigate the relationships between the sensitivity and the
transitivity of set-valued dynamical systems and $g$-fuzzification
through further developing the results in \cite{Kupka2014}. In this study, we prove
that $\overline{f}$ is sensitively dependent if $(\mathbb{F}_{0}(X), \widetilde{f}_{g})$
is sensitively dependent. Combining this with \cite[Proposition 2.1, Proposition 2.2]{Liu}, we
give a negative answer to Question~\ref{Question 1} above on sensitivity.
Moreover, we obtain the following results:
\begin{enumerate}[(1)]
\item $(\mathcal{K}(X), \overline{f})$ is sensitively dependent $\Longleftrightarrow$
$(\mathbb{F}^{1}(X), \widetilde{f}_{g})$ is sensitively dependent for some $g\in D_{m}(I)$
with $g^{-1}(1)=\{1\}$ $\Longleftrightarrow$ $(\mathbb{F}^{1}(X), \widetilde{f}_{g})$ is sensitively dependent for any $g\in D_{m}(I)$
with $g^{-1}(1)=\{1\}$.
\item There exists $g\in D_{m}(I)$ such that for every nontrivial dynamical system
$(X, f)$, $(\mathbb{F}^{1}(X), \widetilde{f}_{g})$ is not transitive (thus, not weakly mixing).
\end{enumerate}

This paper is organized as follows: in Section \ref{S-2}, some basic definitions and notations are introduced.
In Section \ref{S-3}, some results obtained in \cite{Kupka2011-1,Kupka2014}
are corrected. Then, in Sections \ref{S-4} and \ref{S-5}, some preliminary results on the sensitivity,
$\mathscr{F}$-sensitivity, and multi-sensitivity are established and negatively answer Question \ref{Question 1} on sensitivity.
Finally, the transitivity is studied in Section \ref{S-6}.

\section{Basic definitions and notations}\label{S-2}

\subsection{Furstenberg family, transitivity, and sensitivity}
First, recall some basic concepts related to the Furstenberg families
(see \cite{akin97} for more details).

Let $\mathcal{P}$ be the
collection of all subsets of $\mathbb{Z}^{+}$. A collection
$\mathscr{F}\subset\mathcal{P}$ is called a {\it Furstenberg family}
if it is hereditary upwards, i.e., $F_{1}\subset F_{2}$ and
$F_{1}\in\mathscr{F}$ together imply $F_{2}\in\mathscr{F}$. A family
$\mathscr{F}$ is {\it proper} if it is a proper subset of
$\mathcal{P}$, i.e. neither empty nor the whole $\mathcal{P}$.
Throughout this paper, all Furstenberg families are
proper. It is clear that a family $\mathscr{F}$ is proper if
and only if $\mathbb{Z}^{+}\in\mathscr{F}$ and
$\emptyset\notin\mathscr{F}$. Let $\mathscr{F}_{inf}$ be a
Furstenberg family of all infinite subsets of $\mathbb{Z}^+$. For a family $\mathscr{F}$, its {\it dual
family} is
\[
\kappa\mathscr{F}=\left\{F\in \mathcal{P}:
\mathbb{Z}^{+}\setminus F\notin\mathscr{F}\right\}.
\]
It is easy to verify that $\kappa\mathscr{F}$ is a Furstenberg family,
and is proper if $\mathscr{F}$ is so. For Furstenberg families $\mathscr{F}_1$
and $\mathscr{F}_2$, let $\mathscr{F}_{1}\cdot \mathscr{F}_{2}=\left\{F_{1}\cap F_{2}:
F_{1}\in \mathscr{F}_{1}, F_{2}\in \mathscr{F}_{2}\right\}$.
A Furstenberg family $\mathscr{F}$ is a {\it filter} if $\mathscr{F}$ is
proper and $\mathscr{F}\cdot\mathscr{F}\subset \mathscr{F}$.

For $U, V\subset X$, define the {\it return time set from $U$
to $V$} as $ N(U, V)=\{n\in \mathbb{Z}^{+}:
f^{n}(U)\cap V\neq \emptyset\}$. In particular,
$N(x, V)=\left\{n\in \mathbb{Z}^{+}:
f^{n}(x)\in V\right\}$ for $x\in X$.

A dynamical system $(X, f)$ is
\begin{enumerate}[(1)]
\item {\it transitive} if for every pair of nonempty open subsets $U, V$ of $X$, $N(U, V)\neq \emptyset$;
\item {\it (topologically) weakly mixing}
if $(X \times X, f\times f)$ is transitive;
\item {\it mixing} if for every pair of nonempty open subsets $U, V$ of $X$, there exists
$N\in \mathbb{N}$ such that $[N, +\infty)\subset
N(U, V)$.
\end{enumerate}

The ``largeness'' of the time set where sensitivity emerges can be regarded as
a measure of how sensitive a system is. For this reason, Moothathu \cite{Mo2007}
proposed three stronger forms of sensitivity: syndetic sensitivity,
cofinite sensitivity (also called strong sensitivity in \cite{Abraham}),
and multi-sensitivity. Then, Tan and Zhang \cite{Tan-Zhang}
introduced a more general description of sensitivity by using Furstenberg families.

\begin{definition}\cite{Li2012, Mo2007, Tan-Zhang}\label{D-1}
Let $(X, f)$ be a system and $\mathscr{F}$ be a Furstenberg family.
\begin{enumerate}[(1)]
\item $(X, f)$ is {\it multi-sensitive} if there exists $\varepsilon>0$
(multi-sensitive constant) such
that for any $k\in \mathbb{N}$ and nonempty open subsets ${U}_{1}, \ldots, {U}_{k}
\subset X$, $\bigcap_{i=1}^{k}\{n\in \mathbb{Z}^{+}: \mathrm{diam}(f^{n}({U}_{i}))
>\varepsilon\} \neq\emptyset$, i.e., there exists $n\in \mathbb{Z}^{+}$ such that
$\mathrm{diam}(f^{n}({U}_{i})) >\varepsilon$ holds for all $i=1, \ldots, k$,
where $\mathrm{diam}(\cdot)$ denotes the diameter of a given set.

\item $(X, f)$ is {\it $\mathscr{F}$-sensitive} if there exists $\varepsilon>0$
($\mathscr{F}$-sensitive constant) such that for any
nonempty open subset ${U}\subset X$, $\{n\in \mathbb{Z}^{+}:
\mathrm{diam}(f^{n}({U}))>\varepsilon\}\in \mathscr{F}$.

%
\end{enumerate}
\end{definition}

\subsection{Set-valued dynamical system}
Let $\mathcal{K}(X)$ be the hyperspace on $X$, i.e., the space of nonempty
compact subsets of $X$ with the Hausdorff metric $d_{H}$ defined by
\[
d_{H}(A, B)=\max\left\{\max_{x\in A}\min_{y\in B}d(x, y), \max_{y\in B}\min_{x\in A}d(x, y)\right\}
\]
for any $A, B\in \mathcal{K}(X)$. Clearly, $(\mathcal{K}(X), d_{H})$ is a compact
metric space. The system $(X, f)$ induces a set-valued dynamical system $(\mathcal{K}(X), \overline{f})$,
where $\overline{f}: \mathcal{K}(X)\longrightarrow \mathcal{K}(X)$ is defined as
$\overline{f}(A)=f(A)$ for any $A\in \mathcal{K}(X)$.
For any finite collection $A_{1}, \ldots, A_{n}$ of nonempty subsets of $X$, let
\[
\langle A_{1}, \ldots, A_{n}\rangle=\left\{A\in \mathcal{K}(X): A\subset \bigcup_{i=1}^{n}A_{i},
\ A\cap A_{i}\neq \emptyset \text{ for all } i=1, \ldots, n\right\}.
\]
It follows from \cite{Illanes1999} that the topology on $\mathcal{K}(X)$
given by the metric $d_{H}$ is same as the Vietoris or finite
topology, which is generated by a basis consisting of all sets of the following form:
\[
\begin{split}
\langle {U}_{1}, \ldots, {U}_{n}\rangle, &
\text{ where } {U}_{1}, \ldots, {U}_{n} \text{ are an arbitrary finite collection }\\
& \text {of nonempty open subsets of
 } X.
 \end{split}
\]

\subsection{$g$-fuzzification}

{\it A fuzzy set $A$} in space $X$ is a function $A: X\longrightarrow I$,
where $I= [0, 1]$. Given a fuzzy set $A$,
its {\it $\alpha$-cuts} (or {\it $\alpha$-level sets}) $[A]_{\alpha}$
and {\it support} $\mathrm{supp}(A)$ are defined respectively by
\[
[A]_{\alpha}=\{x\in X: A(x)\geq \alpha\}, \quad \forall \alpha\in I,
\]
and
\[
\mathrm{supp}(A)=\overline{\left\{x\in X: A(x)>0\right\}}.
\]

Let $\mathbb{F}(X)$ denote the set of all upper semicontinuous fuzzy sets defined on $X$
and set
\[
\mathbb{F}^{\lambda}(X)=\left\{A\in \mathbb{F}(X): A(x)\geq \lambda \text{ for some }
x\in X\right\}.
\]

Define $\emptyset_{X}$ as the empty fuzzy set ($\emptyset_{X}\equiv 0$) in $X$,
and $\mathbb{F}_{0}(X)$ as the set of all nonempty upper semicontinuous fuzzy sets.
Since the Hausdorff metric $d_{H}$ is measured only between two nonempty closed subsets
in $X$, one can consider the following extension of the Hausdorff metric:
\[
d_{H}(\emptyset, \emptyset)=0, \text{ and } d_{H}(\emptyset, A)=d_{H}(A, \emptyset)=
\mathrm{diam}X, \ \forall A\in \mathcal{K}(X).
\]
Using this Hausdorff metric, one can define a {\it levelwise metric} $d_{\infty}$ on $\mathbb{F}(X)$ by
\[
d_{\infty}(A, B)=
\sup\left\{d_{H}([A]_{\alpha}, [B]_{\alpha}): \alpha\in (0, 1]\right\}, \ \forall A, B\in \mathbb{F}(X).
\]
It is well known that the spaces $(\mathbb{F}(X), d_{\infty})$
and $(\mathbb{F}^1(X), d_{\infty})$ are complete, but not compact and not
separable (see \cite{Kupka2011-1} and references therein).

A fuzzy set $A\in \mathbb{F}(X)$ is {\it piecewise constant}, if there exists a finite
number of sets $D_{i}\subset X$ such that $\bigcup \overline{D_{i}}=X$ and $A|_{\mathrm{int}\overline{D_{i}}}$
is constant. $A$ can be represented by a sequence of closed subsets $\{A_{1}, A_{2}, \ldots, A_{k}\}\subset X$
and an increasing sequence of reals $\{\alpha_{1}, \alpha_{2}, \ldots, \alpha_{k}\}\subset (0, 1]$ if
\[
[A]_{\alpha}=A_{i+1}, \text{ whenever } \alpha\in (\alpha_{i}, \alpha_{i+1}].
\]

Kupka \cite{Kupka2011} proved that the set of all piecewise constant upper
continuous maps is dense in $\mathbb{F}(X)$. Then, in \cite{Kupka2011-1}
he introduced the notion of $g$-fuzzification
to generalize Zadeh's extension.

{\it Zadeh's extension} of a dynamical system
$(X, f)$ is a map $\widetilde{f}: \mathbb{F}(X)\longrightarrow \mathbb{F}(X)$
defined by
\[
\widetilde{f}(A)(x)=\sup\left\{A(y): y\in f^{-1}(x)\right\},
\text{ for any } A\in \mathbb{F}(X) \text{ and any }
x\in X.
\]

Denote by $D_{m}(I)$ the set of all nondecreasing right-continuous functions
$g: I\longrightarrow I$ with $g(0)=0$ and $g(1)=1$. For a dynamical system $(X, f)$
and for any $g\in D_{m}(I)$, define a map $\widetilde{f}_{g}: \mathbb{F}(X)
\longrightarrow \mathbb{F}(X)$ by
\[
\widetilde{f}_{g}(A)(x)=\sup\left\{g(A(y)): y\in f^{-1}(x)\right\},
\text{ for any } A\in \mathbb{F}(X) \text{ and any }
x\in X,
\]
which is called the {\it $g$-fuzzification} of the dynamical system $(X, f)$.
Clearly, $\widetilde{f}=\widetilde{f}_{\mathrm{id}_{I}}$.

Also, define the {\it $\alpha$-cut} $[A]_{\alpha}^{g}$ of a fuzzy set
$A\in \mathbb{F}(X)$ with respect to $g\in D_{m}(I)$ by
\[
[A]_{\alpha}^{g}=\left\{x\in \mathrm{supp}(A): g(A(x))\geq \alpha\right\}.
\]

In \cite{Kupka2011-1, Kupka2014}, Kupka claimed the following:
\begin{lemma}\cite[Lemma 3]{Kupka2011-1}\label{Lemma 2.1}
Let $(X, f)$ be a dynamical system, and $g\in D_{m}(I)$. Then, for any $A\in \mathbb{F}_{0}(X)$
and any $\alpha\in (0, 1]$, $f([A]_{\alpha}^{g})=\left[\widetilde{f}_{g}(A)\right]_{\alpha}$.
\end{lemma}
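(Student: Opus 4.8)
The plan is to establish the set equality $f([A]_{\alpha}^{g})=[\widetilde{f}_{g}(A)]_{\alpha}$ by proving the two inclusions separately. The forward inclusion $f([A]_{\alpha}^{g})\subseteq[\widetilde{f}_{g}(A)]_{\alpha}$ is immediate from the definition of the supremum: if $x=f(y)$ with $y\in[A]_{\alpha}^{g}$, then $y\in f^{-1}(x)$ and $g(A(y))\geq\alpha$, so $\widetilde{f}_{g}(A)(x)=\sup\{g(A(z)):z\in f^{-1}(x)\}\geq g(A(y))\geq\alpha$, whence $x\in[\widetilde{f}_{g}(A)]_{\alpha}$. All of the substance lies in the reverse inclusion, where one starts from the mere fact that a supremum is $\geq\alpha$ and must produce a single witnessing point that lies in $\mathrm{supp}(A)$ and maps to $x$.

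The main obstacle is that a supremum need not be attained, so I would first isolate the auxiliary fact that the composite $g\circ A\colon X\to I$ is upper semicontinuous. This is exactly where the hypotheses on $g\in D_{m}(I)$ are used. Since $g$ is nondecreasing and right-continuous, for each $\alpha\in(0,1]$ the set $\{t\in I:g(t)\geq\alpha\}$ has the form $[s_{\alpha},1]$, where $s_{\alpha}=\inf\{t\in I:g(t)\geq\alpha\}$; right-continuity is what guarantees that the infimum $s_{\alpha}$ itself satisfies $g(s_{\alpha})\geq\alpha$, so the level set is closed on the left. Consequently $\{x\in X:g(A(x))\geq\alpha\}=\{x\in X:A(x)\geq s_{\alpha}\}=[A]_{s_{\alpha}}$, which is closed because $A$ is upper semicontinuous. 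Thus $g\circ A$ is upper semicontinuous. I expect this step to be the crux: without right-continuity of $g$, the level set $\{x\in X:g(A(x))\geq\alpha\}$ could fail to be closed and the supremum defining $\widetilde{f}_{g}(A)(x)$ need not be realized, which is precisely the kind of gap that Section~\ref{S-3} is meant to repair.

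With upper semicontinuity of $g\circ A$ in hand, the reverse inclusion becomes routine. Fix $x\in[\widetilde{f}_{g}(A)]_{\alpha}$, so $\sup\{g(A(z)):z\in f^{-1}(x)\}\geq\alpha$; in particular $f^{-1}(x)\neq\emptyset$. Since $f$ is continuous and $X$ is compact, $f^{-1}(x)$ is compact, and an upper semicontinuous function attains its maximum on a compact set, so there is $z_{0}\in f^{-1}(x)$ with $g(A(z_{0}))=\max\{g(A(z)):z\in f^{-1}(x)\}\geq\alpha$. Because $\alpha>0$ and $g(0)=0$, this forces $A(z_{0})>0$, hence $z_{0}\in\mathrm{supp}(A)$ and therefore $z_{0}\in[A]_{\alpha}^{g}$. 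Since $f(z_{0})=x$, we conclude $x\in f([A]_{\alpha}^{g})$, completing the equality. I would finally note that $f([A]_{\alpha}^{g})$ is automatically closed, being the continuous image of the compact set $[A]_{\alpha}^{g}=[A]_{s_{\alpha}}$, which dovetails with $[\widetilde{f}_{g}(A)]_{\alpha}$ being a level set of an upper semicontinuous fuzzy set; in fact this equality is what certifies that $\widetilde{f}_{g}$ maps $\mathbb{F}_{0}(X)$ into itself.
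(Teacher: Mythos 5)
Your proof of the lemma is correct and complete, but there is nothing in the paper to compare it against: Lemma~\ref{Lemma 2.1} is quoted from \cite[Lemma 3]{Kupka2011-1} and is nowhere reproved here. The closest in-paper content is Section~\ref{S-3}, and your argument in effect re-derives it: your $s_{\alpha}=\inf\{t\in I: g(t)\geq\alpha\}$ is exactly the paper's $\xi_{g}(\alpha)=\min g^{-1}([\alpha,1])$, and your observation that monotonicity plus right-continuity of $g$ force $\{t\in I: g(t)\geq\alpha\}=[s_{\alpha},1]$, hence $[A]_{\alpha}^{g}=[A]_{s_{\alpha}}$, is precisely Lemma~\ref{lemma *}. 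What you add beyond that---and this is the real content of the cited lemma---is the witness-producing step for the inclusion $[\widetilde{f}_{g}(A)]_{\alpha}\subset f([A]_{\alpha}^{g})$: $g\circ A$ is upper semicontinuous because its superlevel sets are the cuts $[A]_{s_{\alpha}}$, the fiber $f^{-1}(x)$ is nonempty and compact, so the supremum defining $\widetilde{f}_{g}(A)(x)$ is attained at some $z_{0}$, and $\alpha>0$ together with $g(0)=0$ and monotonicity forces $A(z_{0})>0$, so $z_{0}\in\mathrm{supp}(A)$ and the restriction to the support in the definition of $[A]_{\alpha}^{g}$ costs nothing. All of this is sound, and the forward inclusion is indeed immediate.

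Two side remarks in your write-up deserve correction, though neither affects the validity of the proof. First, the gap repaired in Section~\ref{S-3} is not about right-continuity or non-attainment of suprema: the counterexample given there uses a continuous $g$, and what actually fails in \cite{Kupka2011-1,Kupka2014} is the identification $[A]^{g}_{g(c)}=[A]_{c}$, i.e.\ the claim $\xi_{g}(g(c))=c$, which breaks down when $g$ is not injective. Second, your closing claim that the equality certifies that $\widetilde{f}_{g}$ maps $\mathbb{F}_{0}(X)$ into itself is too strong: the equality does show that $\widetilde{f}_{g}(A)$ has compact $\alpha$-cuts, hence is upper semicontinuous and lies in $\mathbb{F}(X)$, but $\widetilde{f}_{g}(A)$ can be the empty fuzzy set even when $A$ is not---take $g\in D_{m}(I)$ vanishing on $[0,1/2]$ (say $g(t)=\max\{0,2t-1\}$) and $A\equiv 1/2$, so that $\widetilde{f}_{g}(A)=\emptyset_{X}$. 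Only $\mathbb{F}(X)$ and $\mathbb{F}^{1}(X)$ are preserved in general; for the latter it is the condition $g(1)=1$ that matters.
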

\begin{lemma}\cite[Lemma 5]{Kupka2011-1}\label{Lemma 2.2}
Let $g\in D_{m}(I)$, $A\in \mathbb{F}_{0}(X)$,
and $\alpha\in (0, 1]$. If $[A]_{\alpha}^{g}\neq \emptyset$, then
there exists $c\in (0, 1]$ such that $[A]_{\alpha}^{g}=[A]_{c}$.
\end{lemma}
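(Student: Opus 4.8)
The plan is to reduce the $g$-dependent cut $[A]_{\alpha}^{g}$ to an ordinary cut $[A]_{c}$ by extracting the right threshold $c$ from the monotonicity and right-continuity of $g$. The key point is that, since $g$ is nondecreasing, the defining condition $g(A(x))\geq\alpha$ constrains $A(x)$ only through membership in the super-level set $S:=\{t\in I:g(t)\geq\alpha\}$, and this set is an upper interval. Accordingly, I would set $c:=\inf\{t\in I:g(t)\geq\alpha\}$. The set $S$ is nonempty: since $[A]_{\alpha}^{g}\neq\emptyset$, there is some $x$ with $g(A(x))\geq\alpha$, so $A(x)\in S$. Monotonicity of $g$ makes $S$ hereditary upwards in $I$, so $(c,1]\subseteq S$; and right-continuity of $g$ at $c$ gives $g(c)=\lim_{t\to c^{+}}g(t)\geq\alpha$, whence $S=[c,1]$ is closed from below.

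Next I would verify $c\in(0,1]$. Since $g(1)=1\geq\alpha$ we have $1\in S$, so $c\leq 1$. For strict positivity, note that $g(0)=0<\alpha$ because $\alpha>0$; right-continuity at $0$ yields $\lim_{t\to 0^{+}}g(t)=0$, so $g(t)<\alpha$ on some interval $[0,\delta)$, forcing $c\geq\delta>0$.

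Finally, I would translate back to cuts. By the first paragraph, for any $x\in\mathrm{supp}(A)$ one has $g(A(x))\geq\alpha$ exactly when $A(x)\geq c$, so $[A]_{\alpha}^{g}=\{x\in\mathrm{supp}(A):A(x)\geq c\}$. It remains to drop the support restriction: if $A(x)\geq c$, then $A(x)>0$ because $c>0$, and hence $x\in\{x\in X:A(x)>0\}\subseteq\mathrm{supp}(A)$. Thus the support constraint is automatically satisfied, and $[A]_{\alpha}^{g}=\{x\in X:A(x)\geq c\}=[A]_{c}$, as required.

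I expect the main obstacle to be the careful use of right-continuity. It is precisely this hypothesis that upgrades the super-level set from a possibly half-open interval $(c,1]$ to the closed interval $[c,1]$; this closure is what lets the threshold $c$ itself define an ordinary cut via the non-strict inequality $A(x)\geq c$. Without right-continuity one could only match $[A]_{\alpha}^{g}$ against a strict-inequality set $\{x\in X:A(x)>c\}$, which in general is not of the form $[A]_{c}$.
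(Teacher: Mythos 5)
Your proof is correct and follows essentially the same route as the paper's corrected version of this lemma (its Lemma~3.1): the paper defines $\xi_{g}(\alpha)=\min g^{-1}([\alpha,1])$, whose existence and positivity come from exactly the right-continuity and monotonicity arguments you give, and then identifies $[A]_{\alpha}^{g}=[A]_{\xi_{g}(\alpha)}$ via the same equivalence $g(A(x))\geq\alpha\Longleftrightarrow A(x)\geq c$. Your write-up is merely more explicit about why the infimum is attained, why $c>0$, and why the support restriction can be dropped.
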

\begin{lemma}\cite[Lemma 6]{Kupka2014}\label{Lemma 2.3}
Let $(X, f)$ be a dynamical system, and $g\in D_{m}(I)$. Then, for any $A\in \mathbb{F}(X)$
and any $\alpha\in [0, 1]$, $f([A]_{\alpha})=\left[\widetilde{f}_{g}(A)\right]_{g(\alpha)}$.
\end{lemma}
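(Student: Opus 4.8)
The plan is to prove the asserted identity by establishing the two inclusions separately, working directly from the definition $\widetilde{f}_{g}(A)(z)=\sup\{g(A(y)):y\in f^{-1}(z)\}$ and the fact that $[\widetilde{f}_{g}(A)]_{g(\alpha)}=\{z\in X:\widetilde{f}_{g}(A)(z)\geq g(\alpha)\}$.

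The inclusion $f([A]_{\alpha})\subseteq[\widetilde{f}_{g}(A)]_{g(\alpha)}$ is the routine half. Given $z=f(x)$ with $A(x)\geq\alpha$, the point $x$ lies in $f^{-1}(z)$, so monotonicity of $g$ gives $\widetilde{f}_{g}(A)(z)\geq g(A(x))\geq g(\alpha)$, placing $z$ in the right-hand cut. No compactness or continuity is consumed here; only that $g$ is nondecreasing.

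For the reverse inclusion I would first argue that the supremum defining $\widetilde{f}_{g}(A)(z)$ is attained. When $g(\alpha)>0$, the hypothesis $\widetilde{f}_{g}(A)(z)\geq g(\alpha)>0$ already forces the fibre $f^{-1}(z)$ to be nonempty; it is closed in the compact space $X$, hence compact. Since $A$ is upper semicontinuous and $g$ is nondecreasing and right-continuous, the composite $g\circ A$ is again upper semicontinuous: indeed $\{x:g(A(x))\geq c\}=\{x:A(x)\geq t_{c}\}$, where $t_{c}=\inf\{t:g(t)\geq c\}$ satisfies $g(t_{c})\geq c$ by right-continuity, so this set is a level set of $A$ and therefore closed. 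An upper semicontinuous function attains its maximum on a nonempty compact set, yielding $x_{0}\in f^{-1}(z)$ with $g(A(x_{0}))=\widetilde{f}_{g}(A)(z)\geq g(\alpha)$.

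The crux---and the step I expect to be the real obstacle---is passing from $g(A(x_{0}))\geq g(\alpha)$ back to $A(x_{0})\geq\alpha$, which is precisely what is needed to conclude $x_{0}\in[A]_{\alpha}$ and hence $z=f(x_{0})\in f([A]_{\alpha})$. This reversal is automatic only when $g$ is injective (strictly increasing); for a merely nondecreasing $g$ it can fail, since $g$ may be constant on an interval $[\beta,\alpha]$ with $\beta<\alpha$, in which case a fibre meeting only the levels in $[\beta,\alpha)$ still forces $z$ into $[\widetilde{f}_{g}(A)]_{g(\alpha)}$ without meeting $[A]_{\alpha}$. The same tension surfaces through Lemma~\ref{Lemma 2.1}, which gives $[\widetilde{f}_{g}(A)]_{g(\alpha)}=f([A]_{g(\alpha)}^{g})$ when $g(\alpha)>0$ and $A\in\mathbb{F}_{0}(X)$, so the claimed equality reduces to $f([A]_{\alpha})=f([A]_{g(\alpha)}^{g})$; here $[A]_{\alpha}\subseteq[A]_{g(\alpha)}^{g}$ always holds, but the opposite inclusion again demands injectivity of $g$. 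A separate difficulty arises at $\alpha=0$, where $[A]_{0}=X$ and $g(0)=0$, so the right-hand side is all of $X$ while the left-hand side is $f(X)$, and these differ whenever $f$ is not surjective. I therefore expect the statement to require the extra hypothesis that $g$ is strictly increasing together with $\alpha\in(0,1]$ (or an analogous correction), and my plan is to carry the two inclusions through under such a hypothesis, with the reverse inclusion being exactly where the additional assumption is spent.
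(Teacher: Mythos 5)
Your verdict is exactly the paper's: Lemma \ref{Lemma 2.3} is false as stated, and the paper never proves it --- it is quoted from Kupka's work only so that Section \ref{S-3} can refute it and replace it. The counterexample the paper gives is precisely an instance of the failure mechanism you describe ($g$ constant on an interval below $\alpha$): take $X=[0,1]$, $f=\mathrm{id}$, $A(x)=x$, and $g(x)=2x$ on $[0,\frac{1}{2}]$, $g\equiv 1$ on $[\frac{1}{2},1]$; then $f([A]_{1})=\{1\}$ while $\left[\widetilde{f}_{g}(A)\right]_{g(1)}=[\frac{1}{2},1]$, since every $z\in[\frac{1}{2},1)$ has $g(A(z))=1=g(1)$ without $A(z)\geq 1$. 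The half you do prove, $f([A]_{\alpha})\subset\left[\widetilde{f}_{g}(A)\right]_{g(\alpha)}$, is exactly what the paper salvages (the proposition immediately following Proposition \ref{Proposition 3.1}), by the same one-line monotonicity argument; and your observation that $\{x: g(A(x))\geq c\}=\{x: A(x)\geq t_{c}\}$ with $t_{c}=\min g^{-1}([c,1])$ is precisely the paper's Lemma \ref{lemma *}, with $t_{c}=\xi_{g}(c)$.

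The one place you and the paper part ways is the repair. You propose restoring the two-sided identity by strengthening the hypotheses ($g$ strictly increasing, $\alpha\in(0,1]$); that would make your argument go through, but it excludes most of $D_{m}(I)$ --- in particular the step functions the paper needs later (Remark \ref{R-1}) to kill transitivity of $g$-fuzzifications. The paper instead keeps $g\in D_{m}(I)$ arbitrary and re-parametrizes the cut level: rather than relating $f([A]_{\alpha})$ to the $g(\alpha)$-cut of the image, it proves $\left[\widetilde{f}_{g}(A)\right]_{\alpha}=f([A]_{\xi_{g}(\alpha)})$ for $\alpha\in(0,1]$, by combining Kupka's Lemma \ref{Lemma 2.1} with Lemma \ref{lemma *}, and iterates this in Proposition \ref{Proposition 3.1} to get $\left[(\widetilde{f}_{g})^{n}(A)\right]_{\alpha}=f^{n}([A]_{\xi_{g}^{n}(\alpha)})$. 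Reading the cut of the image and pulling the level back through $\xi_{g}$ sidesteps the injectivity issue entirely, and it is this $\xi_{g}$-version, not the original lemma, that powers all the sensitivity and transitivity results in Sections \ref{S-4}--\ref{S-6}. So your diagnosis and partial proof are correct and coincide with the paper's; the only thing you miss is that the productive fix is to change the statement rather than to restrict $g$.
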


\section{Some remarks and new results}\label{S-3}
Firstly, we use an example to show that the proof of Lemma~\ref{Lemma 2.2} given in \cite{Kupka2011}
and the above-stated Lemma \ref{Lemma 2.3} do not hold.
\begin{example}
Let $X=[0, 1]$ and $f: X\longrightarrow X$ defined by $f(x)=x$ for all $x\in [0, 1]$. Define
$g: [0, 1]\longrightarrow [0, 1]$ by
\[
g(x)=\left\{\begin{array}{cc}
2x, & {\rm } \ x\in [0, \frac{1}{2}], \\\
1, & {\rm } \ x\in [\frac{1}{2}, 1],
\end{array}
\right.
\]
and take $A\in \mathbb{F}(X)$ with $A=f$. It is easy to see that
\[
f([A]_{1})=f(\{1\})=\{1\},
\]
and
\[
\left[\widetilde{f}_{g}(A)\right]_{g(1)}=\left\{x\in [0, 1]: \widetilde{f}_{g}(A)(x)
\geq 1\right\}=\left\{x\in [0, 1]: g(A(x))\geq 1\right\}=[1/2, 1].
\]
So, the above-stated Lemma \ref{Lemma 2.3}
does not hold. In the proof of Lemma \ref{Lemma 2.2} given in [9], it was claimed that for any $c\in (0, 1]$
with $g(c)>0$, $[A]_{g(c)}^{g}=[A]_{c}$. However, let us simply choose $c=1$. It can be verified
that $[A]_{g(1)}^{g}=\left\{x\in [0, 1]: g(A(x))\geq 1\right\}=[1/ 2, 1]\neq
\{1\}=[A]_1$.
\end{example}

For any $g\in D_{m}(I)$, the right-continuity of $g$
implies that $\min g^{-1}([x, 1])$ exists for any $x\in [0, 1]$.
Since $g$ is nondecreasing, $\min g^{-1}([x, 1])>0$
holds for any $x\in (0, 1]$. Define $\xi_{g}: [0, 1]\longrightarrow [0,1]$
by $\xi_{g}(x)=\min g^{-1}([x, 1])$ for any $x\in [0, 1]$. Clearly, $\xi_{g}$
is nondecreasing.

Next, we give a correct statement and proof Lemma \ref{Lemma 2.2}.

\begin{lemma}\label{lemma *}
Let $g\in D_{m}(I)$, $A\in \mathbb{F}_{0}(X)$,
and $\alpha\in (0, 1]$. Then,
there exists $c\in (0, 1]$ such that $[A]_{\alpha}^{g}=[A]_{\xi_{g}(\alpha)}$.
\end{lemma}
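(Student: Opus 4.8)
The plan is to prove the stronger pointwise fact that, for every $t\in[0,1]$ and every $\alpha\in(0,1]$, one has $g(t)\geq\alpha$ if and only if $t\geq\xi_{g}(\alpha)$; once this equivalence is available, the lemma follows by applying it with $t=A(x)$. Taking $c=\xi_{g}(\alpha)$, the remark preceding this lemma already guarantees $c\in(0,1]$ (indeed $\xi_{g}(\alpha)>0$ for $\alpha>0$, and $\xi_{g}(\alpha)\leq 1$ because $\xi_{g}$ maps into $[0,1]$), so the candidate $c$ is of the required form.

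First I would establish the equivalence. The direction $g(t)\geq\alpha\Rightarrow t\geq\xi_{g}(\alpha)$ is immediate: $g(t)\geq\alpha$ means $t\in g^{-1}([\alpha,1])$, hence $t\geq\min g^{-1}([\alpha,1])=\xi_{g}(\alpha)$. For the converse I need the crucial fact that $g(\xi_{g}(\alpha))\geq\alpha$, i.e. that the minimum defining $\xi_{g}(\alpha)$ is actually attained inside $g^{-1}([\alpha,1])$. This is exactly where the right-continuity of $g$ enters: since $g$ is nondecreasing, the set $g^{-1}([\alpha,1])=\{s:g(s)\geq\alpha\}$ is an up-set, and right-continuity forces it to be closed from the left, hence of the form $[\xi_{g}(\alpha),1]$ with $g(\xi_{g}(\alpha))\geq\alpha$. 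Granting this, if $t\geq\xi_{g}(\alpha)$ then monotonicity of $g$ gives $g(t)\geq g(\xi_{g}(\alpha))\geq\alpha$.

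With the equivalence in hand, the two sets are compared directly. Applying it with $t=A(x)$ yields, for every $x\in X$,
\[
g(A(x))\geq\alpha\iff A(x)\geq\xi_{g}(\alpha)=c,
\]
so $\{x\in X:g(A(x))\geq\alpha\}=[A]_{c}$. It then remains to remove the support restriction appearing in the definition of $[A]_{\alpha}^{g}$. This is where the hypothesis $\alpha>0$ is used: if $g(A(x))\geq\alpha$, then $A(x)\geq c>0$, so $x\in\{y\in X:A(y)>0\}\subset\mathrm{supp}(A)$; hence intersecting with $\mathrm{supp}(A)$ changes nothing, and $[A]_{\alpha}^{g}=\{x\in X:g(A(x))\geq\alpha\}=[A]_{c}$, as required.

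The only genuinely delicate point is the attainment of the minimum, namely $g(\xi_{g}(\alpha))\geq\alpha$; everything else is routine set-theoretic bookkeeping. The counterexample in the preceding Example is a reminder that, without tracking $\xi_{g}$ correctly (as opposed to using $g(\alpha)$ or $c$ directly), the identity fails, so the main care throughout is to keep the roles of $\alpha$ and $\xi_{g}(\alpha)$ properly distinguished.
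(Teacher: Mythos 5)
Your proof is correct and follows essentially the same route as the paper's: the pointwise equivalence $g(t)\geq\alpha \iff t\geq\xi_{g}(\alpha)$ (monotonicity for one direction, attainment of the minimum via right-continuity for the other), applied with $t=A(x)$, followed by the observation that $\xi_{g}(\alpha)>0$ makes the restriction to $\mathrm{supp}(A)$ vacuous, with $c=\xi_{g}(\alpha)$. The paper compresses all of this into a one-line chain of set equalities justified only by ``$g$ is nondecreasing,'' so your version merely makes explicit the two points (attainment of the minimum, which was already secured in the paper's remark defining $\xi_{g}$, and the removal of the support restriction) that the paper leaves implicit.
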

\begin{proof}
Because $g$ is nondecreasing, it follows that
\[
[A]_{\alpha}^{g}=\left\{x\in \mathrm{supp}(A): g(A(x))\geq \alpha\right\}
=\left\{x\in \mathrm{supp}(A): A(x)\geq \xi_{g}(\alpha)\right\}=[A]_{\xi_{g}(\alpha)}.
\]
\end{proof}
\begin{proposition}\label{Proposition 3.1}
Let $(X, f)$ be a dynamical system, $g\in D_{m}(I)$, and $\widetilde{f}_{g}$
be the $g$-fuzzification of $f$. Then, for any $n\in \mathbb{N}$, any $A\in \mathbb{F}(X)$,
and any $\alpha\in (0, 1]$, $\left[(\widetilde{f}_{g})^{n}(A)\right]_{\alpha}
=f^{n}([A]_{\xi_{g}^{n}(\alpha)})$. In particular, for any $B\in \mathcal{K}(X)$,
$\left[(\widetilde{f}_{g})^{n}(\chi_{B})\right]_{\alpha}
=f^{n}(B)$.
\end{proposition}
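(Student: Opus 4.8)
The plan is to prove the first identity by induction on $n$, using the corrected level-set formula of Lemma~\ref{lemma *} as the engine, and then to read off the statement about $\chi_B$ as an immediate special case. For the base case $n=1$ I would first treat $A\in\mathbb{F}_0(X)$ by combining Lemma~\ref{Lemma 2.1} with Lemma~\ref{lemma *}: for $\alpha\in(0,1]$,
\[
\bigl[\widetilde{f}_g(A)\bigr]_\alpha=f\bigl([A]_\alpha^g\bigr)=f\bigl([A]_{\xi_g(\alpha)}\bigr).
\]
The remaining case $A=\emptyset_X$ is trivial, since $\widetilde{f}_g(\emptyset_X)=\emptyset_X$ (because $\sup\{g(0)\}=0$), so both sides are empty. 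Hence the base identity $[\widetilde{f}_g(B)]_\alpha=f([B]_{\xi_g(\alpha)})$ holds for \emph{every} $B\in\mathbb{F}(X)$ and every $\alpha\in(0,1]$; stating it for all of $\mathbb{F}(X)$ rather than only $\mathbb{F}_0(X)$ is the one point that needs care, and I return to it below.

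Before the induction I would record the structural fact that $\xi_g$ maps $(0,1]$ into $(0,1]$: by the discussion preceding Lemma~\ref{lemma *}, $\xi_g(\alpha)=\min g^{-1}([\alpha,1])>0$ for $\alpha\in(0,1]$, while $\xi_g(\alpha)\le 1$ by definition. Consequently every iterated level $\xi_g^n(\alpha)$ remains in $(0,1]$, so the lemmas stay applicable at each step. For the inductive step, assume the identity for $n$, set $B=(\widetilde{f}_g)^n(A)$, apply the base identity to $B$ at level $\alpha$, and then the inductive hypothesis at level $\xi_g(\alpha)\in(0,1]$:
\[
\bigl[(\widetilde{f}_g)^{n+1}(A)\bigr]_\alpha=f\bigl([B]_{\xi_g(\alpha)}\bigr)=f\bigl(f^n([A]_{\xi_g^n(\xi_g(\alpha))})\bigr)=f^{n+1}\bigl([A]_{\xi_g^{n+1}(\alpha)}\bigr),
\]
which closes the induction.

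Finally, for the special case $A=\chi_B$ with $B\in\mathcal{K}(X)$: since $[\chi_B]_\beta=B$ for every $\beta\in(0,1]$ and $\xi_g^n(\alpha)\in(0,1]$, the main identity yields $[(\widetilde{f}_g)^n(\chi_B)]_\alpha=f^n([\chi_B]_{\xi_g^n(\alpha)})=f^n(B)$, as claimed. The main obstacle is not computational but conceptual, namely the possibility that $\widetilde{f}_g$ sends a nonempty fuzzy set to $\emptyset_X$ (this happens when $g$ vanishes near $0$ and the values of $A$ are too small, so that $g(A(x))\equiv 0$); because of this the intermediate iterate $B=(\widetilde{f}_g)^n(A)$ need not lie in $\mathbb{F}_0(X)$, which is exactly why the base identity must be established for all of $\mathbb{F}(X)$. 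Once that is in place, the induction runs uniformly for every $A\in\mathbb{F}(X)$.
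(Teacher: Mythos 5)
Your proof is correct and follows essentially the same route as the paper's own (very terse) argument: the base case is Lemma~\ref{Lemma 2.1} combined with Lemma~\ref{lemma *}, followed by induction on $n$ via $\xi_g^{n}(\xi_g(\alpha))=\xi_g^{n+1}(\alpha)$. Your one genuine addition is worth keeping: the paper's proof silently applies Lemma~\ref{Lemma 2.1} (stated only for $\mathbb{F}_0(X)$) to the iterates $(\widetilde{f}_g)^{n}(A)$, which, as you correctly observe, may equal $\emptyset_X$ even when $A\neq\emptyset_X$ (e.g.\ when $g$ vanishes on a neighborhood of $0$ and $A$ takes only small values), so your explicit verification of the base identity on all of $\mathbb{F}(X)$, together with the remark that $\xi_g$ maps $(0,1]$ into $(0,1]$, closes a gap the paper leaves unaddressed.
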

\begin{proof}
Applying Lemma \ref{Lemma 2.1} and Lemma \ref{lemma *}, it follows that
$\left[(\widetilde{f}_{g})(A)\right]_{\alpha}=f([A]_{\alpha}^{g})
=f^{n}([A]_{\xi_{g}(\alpha)})$. Applying mathematical induction, it is not difficult
to verify that the proposition is true.
\end{proof}
\begin{proposition}
Let $(X, f)$ be a dynamical system, and $g\in D_{m}(I)$. Then, for any $A\in \mathbb{F}(X)$
and any $\alpha\in [0, 1]$, $f([A]_{\alpha})\subset \left[\widetilde{f}_{g}(A)\right]_{g(\alpha)}$.
\end{proposition}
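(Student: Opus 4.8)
The plan is to prove the inclusion pointwise by unwinding the definitions of the $\alpha$-cut, the $g$-fuzzification $\widetilde{f}_{g}$, and exploiting the monotonicity of $g$. The key observation is that, unlike the (false) equality asserted in Lemma~\ref{Lemma 2.3}, only the forward inclusion survives: since $g\in D_{m}(I)$ may be constant on an interval, the level set $[\widetilde{f}_{g}(A)]_{g(\alpha)}$ can be strictly larger than the image $f([A]_{\alpha})$, which is exactly what the preceding example exhibits.

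First I would dispose of the trivial cases. If $[A]_{\alpha}=\emptyset$, then $f([A]_{\alpha})=\emptyset$ and the inclusion holds vacuously; and when $\alpha=0$ one has $g(0)=0$, so the right-hand side $[\widetilde{f}_{g}(A)]_{0}$ is all of $X$ and the inclusion is immediate. Hence I may assume $[A]_{\alpha}\neq\emptyset$ and $\alpha\in(0,1]$.

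Next, fix an arbitrary point $z\in f([A]_{\alpha})$ and choose a preimage: there exists $x\in [A]_{\alpha}$ with $f(x)=z$, so that $A(x)\geq\alpha$ by the definition of the $\alpha$-cut. Since $x\in f^{-1}(z)$, the value $g(A(x))$ is one of those over which the supremum defining $\widetilde{f}_{g}(A)(z)=\sup\{g(A(y)):y\in f^{-1}(z)\}$ is taken; hence $\widetilde{f}_{g}(A)(z)\geq g(A(x))$. Because $g$ is nondecreasing and $A(x)\geq\alpha$, we have $g(A(x))\geq g(\alpha)$. Combining the two estimates yields $\widetilde{f}_{g}(A)(z)\geq g(\alpha)$, i.e.\ $z\in[\widetilde{f}_{g}(A)]_{g(\alpha)}$. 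As $z$ was arbitrary, the desired inclusion follows.

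There is no genuine obstacle here: the argument is a direct chase through the definitions, and the only point requiring care is the combined use of the monotonicity of $g$ with the elementary fact that a supremum dominates each of its terms. The failure of the reverse inclusion, already witnessed in the preceding example, is precisely what prevents upgrading this proposition to the equality that was incorrectly claimed in Lemma~\ref{Lemma 2.3}.
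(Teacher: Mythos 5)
Your proof is correct and takes essentially the same route as the paper's: pick a preimage $x\in[A]_{\alpha}$ of an arbitrary point $z\in f([A]_{\alpha})$, then combine the monotonicity of $g$ with the fact that the supremum defining $\widetilde{f}_{g}(A)(z)$ dominates the term $g(A(x))$. If anything you are slightly more careful than the paper, which asserts the equality $\widetilde{f}_{g}(A)(z)=g(A(y))$ when only the inequality $\widetilde{f}_{g}(A)(z)\geq g(A(y))$ is justified (other preimages of $z$ could give larger values), and that inequality is all the argument needs.
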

\begin{proof}
For any $x\in f([A]_{\alpha})$, there exists $y\in [A]_{\alpha}$ such that $x=f(y)$. It is easy
to see that $\widetilde{f}_{g}(A)(x)=g(A(y))\geq g(\alpha)$. So, $x\in \left[\widetilde{f}_{g}(A)\right]_{g(\alpha)}$.
\end{proof}
\begin{lemma}\label{Lemma **}
Let $(X, f)$ be a dynamical system, and $g\in D_{m}(I)$ with $g^{-1}(1)=\{1\}$.
Then, for any $n\in \mathbb{N}$ and any $A\in \mathbb{F}(X)$,
$\left[(\widetilde{f}_{g})^{n}(A)\right]_{1}
=f^{n}([A]_{1})$.
\end{lemma}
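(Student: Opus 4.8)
The plan is to reduce the statement to the special value $\alpha=1$ in Proposition~\ref{Proposition 3.1} by showing that the auxiliary function $\xi_g$ fixes the point $1$ under the hypothesis $g^{-1}(1)=\{1\}$. Proposition~\ref{Proposition 3.1} already gives, for any $n\in\mathbb{N}$ and any $A\in\mathbb{F}(X)$, the identity $\left[(\widetilde{f}_g)^n(A)\right]_\alpha = f^n\!\left([A]_{\xi_g^n(\alpha)}\right)$ for all $\alpha\in(0,1]$. Specializing to $\alpha=1$, the right-hand side becomes $f^n\!\left([A]_{\xi_g^n(1)}\right)$, so the entire lemma will follow once I verify that $\xi_g^n(1)=1$, i.e. that $\xi_g(1)=1$ and hence all iterates of $1$ under $\xi_g$ equal $1$.

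\emph{First I would} compute $\xi_g(1)$ directly from its definition $\xi_g(x)=\min g^{-1}([x,1])$. At $x=1$ this is $\xi_g(1)=\min g^{-1}([1,1])=\min g^{-1}(\{1\})=\min g^{-1}(1)$. Here is exactly where the hypothesis enters: since $g^{-1}(1)=\{1\}$, the set $g^{-1}(1)$ is the singleton $\{1\}$, so its minimum is $1$, giving $\xi_g(1)=1$. (The existence of this minimum is guaranteed by the right-continuity of $g$, as already noted in the paragraph defining $\xi_g$.) An immediate induction then yields $\xi_g^n(1)=1$ for every $n\in\mathbb{N}$.

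\emph{Then I would} substitute $\xi_g^n(1)=1$ into the $\alpha=1$ instance of Proposition~\ref{Proposition 3.1} to conclude $\left[(\widetilde{f}_g)^n(A)\right]_1 = f^n\!\left([A]_1\right)$, which is precisely the desired identity. This completes the argument.

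I do not anticipate a genuine obstacle here, as the proof is essentially the observation that the single hypothesis $g^{-1}(1)=\{1\}$ pins down the $\xi_g$-value at the top level. \emph{The only point requiring care} is the boundary behavior at $\alpha=1$: I must check that Proposition~\ref{Proposition 3.1} is applicable at $\alpha=1$ (it is, since the proposition is stated for all $\alpha\in(0,1]$) and that the set $[A]_1=\{x\in X:A(x)\ge 1\}$ may be empty, in which case both sides reduce to the empty set and the identity holds trivially under the convention adopted for $d_H$ on $\emptyset$. Apart from tracking this degenerate case, the proof is a one-line specialization of the already-established proposition.
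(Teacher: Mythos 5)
Your proof is correct and takes essentially the same route as the paper: the paper's own proof of Lemma~\ref{Lemma **} is exactly the one-line reduction to Proposition~\ref{Proposition 3.1}, noting that the hypothesis $g^{-1}(1)=\{1\}$ forces $\xi_{g}^{n}(1)=1$. You have merely written out the details the paper leaves as ``easy to verify,'' namely the computation $\xi_{g}(1)=\min g^{-1}(\{1\})=1$, the induction on $n$, and the harmless degenerate case $[A]_{1}=\emptyset$.
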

\begin{proof}
Applying Proposition \ref{Proposition 3.1}, noting that $g^{-1}(1)=\{1\}$,
it is easy to verify that the lemma is true.
\end{proof}

\section{Sensitivity of $g$-fuzzification}\label{S-4}

This section is devoted to studying the sensitivity of the
$g$-fuzzification systems.

\begin{proposition}\label{Lemma 1}
Let $(X, f)$ be a dynamical system, $A\in \mathcal{K}(X)$ and $\{B_{\lambda}\}_{\lambda\in \Lambda}
\subset \mathcal{K}(X)$. If there exists $\xi>0$ such that for any $\lambda\in \Lambda$, $d_{H}(A, B_{\lambda})<\xi$,
then $d_{H}(A, \overline{\cup_{\lambda\in \Lambda}B_{\lambda}})\leq \xi$.
\end{proposition}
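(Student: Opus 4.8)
The plan is to set $C := \overline{\bigcup_{\lambda \in \Lambda} B_{\lambda}}$ and to bound each of the two one-sided pieces making up $d_{H}(A, C)$ by $\xi$ separately. First I would check that $C \in \mathcal{K}(X)$, so that the statement even makes sense: $C$ is nonempty because each $B_{\lambda}$ is, and, being a closed subset of the compact space $X$, it is compact. Compactness of both $A$ and $C$ also guarantees that the minima in the definition of $d_{H}$ are attained, so that $d_{H}(A, C) = \max\{\max_{a \in A}\min_{c \in C} d(a, c),\ \max_{c \in C}\min_{a \in A}d(a, c)\}$.

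For the first piece, fix $a \in A$. For any single $\lambda$, the hypothesis $d_{H}(A, B_{\lambda}) < \xi$ forces $\min_{b \in B_{\lambda}} d(a, b) < \xi$; since $B_{\lambda} \subset C$, enlarging the set only decreases the distance, giving $\min_{c \in C} d(a, c) \leq \min_{b \in B_{\lambda}} d(a, b) < \xi$. Taking the maximum over $a \in A$ then yields $\max_{a \in A}\min_{c \in C} d(a, c) \leq \xi$.

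For the second piece I would first treat points of the union and then pass to the closure. If $c \in \bigcup_{\lambda} B_{\lambda}$, say $c \in B_{\mu}$, then $d_{H}(A, B_{\mu}) < \xi$ forces $\min_{a \in A} d(c, a) < \xi$. For a general $c \in C$, choose a sequence $c_{n} \in \bigcup_{\lambda} B_{\lambda}$ with $c_{n} \to c$; because the map $x \mapsto \min_{a \in A} d(x, a)$ is $1$-Lipschitz (the minimum being attained since $A$ is compact), and hence continuous, I obtain $\min_{a \in A} d(c, a) = \lim_{n} \min_{a \in A} d(c_{n}, a) \leq \xi$. Maximizing over $c \in C$ gives $\max_{c \in C}\min_{a \in A} d(c, a) \leq \xi$, and combining the two pieces yields $d_{H}(A, C) \leq \xi$.

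The only genuine subtlety is precisely this passage to the closure. The strict inequalities $d_{H}(A, B_{\lambda}) < \xi$ supply strict bounds only at points of $\bigcup_{\lambda} B_{\lambda}$, and it is the continuity of the point-to-set distance that lets these bounds survive at the extra limit points introduced by the closure, now in the non-strict form $\leq \xi$. This degradation from $<$ to $\leq$ is exactly why the conclusion is stated with $\leq \xi$ rather than $< \xi$.
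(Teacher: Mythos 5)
Your proof is correct and follows essentially the same route as the paper: both split $d_{H}(A,C)$ into its two one-sided suprema, bound the first by comparing with a single $B_{\lambda_{0}}\subset C$, and handle the closure points in the second via a limiting argument. The only cosmetic difference is that you package the paper's explicit $1/n$ triangle-inequality step as the $1$-Lipschitz continuity of $x\mapsto\min_{a\in A}d(x,a)$, which is the same idea stated more abstractly.
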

\begin{proof}
It is easy to see that for any $\lambda_{0}\in \Lambda$,
\begin{equation}\label{e-1}
\begin{split}
\sup_{x\in A}\inf_{y\in \overline{\cup_{\lambda\in \Lambda}B_{\lambda}}}d(x, y)
& \ \leq \ \sup_{x\in A}\inf_{y\in \cup_{\lambda\in \Lambda}B_{\lambda}}d(x, y)\\
& \ \leq \ \sup_{x\in A}\inf_{y\in B_{\lambda_{0}}}d(x, y)\leq d_{H}(A, B_{\lambda_{0}})
<\xi.
\end{split}
\end{equation}
Now, for any $y\in \overline{\bigcup_{\lambda\in \Lambda}B_{\lambda}}$, consider the
following two cases:
\begin{enumerate}[(a)]
\item if $y\in \bigcup_{\lambda\in \Lambda}B_{\lambda}$, then there exists $\lambda\in \Lambda$
such that $y\in B_{\lambda}$. Thus, $\inf_{x\in A}d(x, y)\leq d_{H}(A, B_{\lambda})<\xi$;
\item if $y\in \overline{\bigcup_{\lambda\in \Lambda}B_{\lambda}}\setminus \bigcup_{\lambda\in \Lambda}B_{\lambda}$,
then for any $n\in \mathbb{N}$, there exist $\lambda_{n}\in \Lambda$ and $z\in B_{\lambda_{n}}$ such that
$d(y, z)<1/n$. According to the definition of $d_{H}(A, B_{\lambda_{n}})$, there exists
$x_{n}\in A$ such that $d(x_{n}, z)\leq d_{H}(A, B_{\lambda_{n}})<\xi$. So,
\[
\inf_{x\in A}d(x, y)
\leq d(x_{n}, y)\leq d(x_{n}, z)+ d(z, y)<\xi+ \frac{1}{n}.
\]
\end{enumerate}
This implies that $\sup_{y\in \overline{\bigcup_{\lambda\in \Lambda}B_{\lambda}}}\inf_{x\in A}
d(x, y)\leq \xi$. Combining this with \eqref{e-1}, it follows that
$d_{H}(A, \overline{\bigcup_{\lambda\in \Lambda}B_{\lambda}})\leq \xi$.
\end{proof}
\begin{theorem}\label{Theorem 3.1}
Let $(X, f)$ be a dynamical system and $g\in D_{m}(I)$. If
$(\mathbb{F}_{0}(X), \widetilde{f}_{g})$ is sensitively dependent, then
$(\mathcal{K}(X), \overline{f})$ is sensitively dependent.
\end{theorem}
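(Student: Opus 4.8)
The plan is to exploit the fact that the hyperspace $(\mathcal{K}(X),\overline{f})$ sits isometrically inside $(\mathbb{F}_{0}(X),\widetilde{f}_{g})$ through the characteristic-function embedding $K\mapsto\chi_{K}$, and to push the given sensitivity of the larger system down to this subsystem. First I would record the two structural facts that make the embedding faithful: for $K\in\mathcal{K}(X)$ one has $[\chi_{K}]_{\alpha}=K$ for every $\alpha\in(0,1]$, whence $d_{\infty}(\chi_{A},\chi_{B})=d_{H}(A,B)$; and, by Proposition~\ref{Proposition 3.1}, $[(\widetilde{f}_{g})^{n}(\chi_{K})]_{\alpha}=f^{n}(K)=\overline{f}^{n}(K)$ for every $\alpha\in(0,1]$, so that $(\widetilde{f}_{g})^{n}(\chi_{K})=\chi_{\overline{f}^{n}(K)}$. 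Thus on the image of the embedding both the metric and the dynamics agree with those of $\mathcal{K}(X)$.

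Let $\delta>0$ be a sensitivity constant for $(\mathbb{F}_{0}(X),\widetilde{f}_{g})$; I claim the same $\delta$ witnesses sensitivity of $(\mathcal{K}(X),\overline{f})$. Given $K\in\mathcal{K}(X)$ and $0<\varepsilon\le\mathrm{diam}X$ (it suffices to treat small $\varepsilon$, the case $\varepsilon>\mathrm{diam}X$ following from monotonicity of the balls, and note that sensitivity of $\mathbb{F}_{0}(X)$ forces $\mathrm{diam}X>0$), I apply sensitivity to the point $\chi_{K}$: there are $C\in\mathbb{F}_{0}(X)$ with $d_{\infty}(\chi_{K},C)<\varepsilon$ and $n\in\mathbb{Z}^{+}$ with $d_{\infty}((\widetilde{f}_{g})^{n}(\chi_{K}),(\widetilde{f}_{g})^{n}(C))>\delta$. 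Since $d_{\infty}(\chi_{K},C)<\varepsilon\le\mathrm{diam}X$ and the extended Hausdorff metric assigns distance $\mathrm{diam}X$ to any pair consisting of a nonempty set and $\emptyset$, every cut $[C]_{\beta}$ with $\beta\in(0,1]$ must be nonempty (otherwise $d_{H}(K,[C]_{\beta})=\mathrm{diam}X$ would force $d_{\infty}\ge\mathrm{diam}X$), and moreover $d_{H}(K,[C]_{\beta})<\varepsilon$ for all such $\beta$.

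Next I would convert the fuzzy witness $C$ into a genuine compact set by selecting the right level. Writing out the levelwise metric and using $(\widetilde{f}_{g})^{n}(\chi_{K})=\chi_{f^{n}(K)}$ together with Proposition~\ref{Proposition 3.1},
\[
\sup_{\alpha\in(0,1]}d_{H}\bigl(f^{n}(K),\,f^{n}([C]_{\xi_{g}^{n}(\alpha)})\bigr)
= d_{\infty}\bigl((\widetilde{f}_{g})^{n}(\chi_{K}),(\widetilde{f}_{g})^{n}(C)\bigr)>\delta,
\]
so there is some $\alpha_{0}\in(0,1]$ with $d_{H}(f^{n}(K),f^{n}([C]_{\beta_{0}}))>\delta$, where $\beta_{0}:=\xi_{g}^{n}(\alpha_{0})\in(0,1]$ (here $\xi_{g}$ maps $(0,1]$ into $(0,1]$, so the iterate stays in $(0,1]$). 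Setting $L:=[C]_{\beta_{0}}$, the previous paragraph gives $L\in\mathcal{K}(X)$ and $d_{H}(K,L)<\varepsilon$, while $\overline{f}^{n}(L)=f^{n}([C]_{\beta_{0}})$ yields $d_{H}(\overline{f}^{n}(K),\overline{f}^{n}(L))>\delta$. Since $K$ and $\varepsilon$ were arbitrary, $(\mathcal{K}(X),\overline{f})$ is sensitively dependent with constant $\delta$.

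I expect the main obstacle to be exactly the step of replacing the abstract fuzzy witness $C$ by a compact set: sensitivity in $\mathbb{F}_{0}(X)$ is guaranteed only by \emph{some} fuzzy perturbation, not by a characteristic function, so one must show that the level at which the $d_{\infty}$-separation is realized produces a nonempty $\alpha$-cut lying within $\varepsilon$ of $K$. This is where controlling the empty-fuzzy-set convention (forcing $\varepsilon\le\mathrm{diam}X$) and the bookkeeping of the rescaling $\xi_{g}^{n}$ supplied by Proposition~\ref{Proposition 3.1} are essential; everything else is a routine transfer along the isometric embedding.
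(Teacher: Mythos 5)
Your proof is correct and follows essentially the same route as the paper's own argument: embed $\mathcal{K}(X)$ into $\mathbb{F}_{0}(X)$ via $K\mapsto\chi_{K}$, apply the assumed sensitivity at the point $\chi_{K}$, use Proposition~\ref{Proposition 3.1} to identify the $\alpha$-cuts of the iterates, pick a level $\alpha_{0}$ realizing the $d_{\infty}$-separation, and take the corresponding cut of the fuzzy witness as the compact witness. If anything, you are slightly more careful than the paper, which tacitly assumes the cut $[B]_{\xi_{g}^{n}(\alpha_{0})}$ is nonempty, whereas you justify this explicitly through the convention $d_{H}(\emptyset,A)=\mathrm{diam}\,X$ and the harmless reduction to $\varepsilon\le\mathrm{diam}\,X$.
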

\begin{proof}
Let $\varepsilon>0$ be a sensitive constant of $\widetilde{f}_{g}$.
Given any fixed $A\in \mathcal{K}(X)$ and any $\delta>0$, noting that $\chi_{A}\in \mathbb{F}_{0}(X)$,
the sensitivity of $\widetilde{f}_{g}$ implies that there exist $B\in \mathbb{F}_{0}(X)$ with
$d_{\infty}(\chi_{A}, B)<\frac{\delta}{2}$ and $n\in \mathbb{Z}^{+}$ such that
\begin{equation}\label{e-2}
d_{\infty}((\widetilde{f}_{g})^{n}(\chi_{A}), (\widetilde{f}_{g})^{n}(B))>\varepsilon.
\end{equation}
Applying Proposition \ref{Proposition 3.1}, it follows that for any $\alpha\in (0, 1]$, there exists
$\xi_{g}^{n}(\alpha)\in (0, 1]$ such that
\[
\left[(\widetilde{f}_{g})^{n}(\chi_{A})\right]_{\alpha}=\overline{f}^{n}(A), \text{ and }
\left[(\widetilde{f}_{g})^{n}(B)\right]_{\alpha}=\overline{f}^{n}([B]_{\xi_{g}^{n}(\alpha)}).
\]
This, together with \eqref{e-2}, implies that there exists $\alpha_{0}\in (0, 1]$ such that
\[
d_{H}\left(\left[(\widetilde{f}_{g})^{n}(\chi_{A})\right]_{\alpha_{0}}, \left[(\widetilde{f}_{g})^{n}(B)\right]_{\alpha_{0}}\right)
=d_{H}\left(\overline{f}^{n}(A), \overline{f}^{n}([B]_{\xi_{g}^{n}(\alpha_0)})\right)>\varepsilon.
\]
Clearly,
\[
d_{H}(A, [B]_{\xi_{g}^{n}(\alpha_0)})=d_{H}([\chi_A]_{\xi_{g}^{n}(\alpha_0)},
[B]_{\xi_{g}^{n}(\alpha_0)})\leq d_{\infty}(\chi_{A}, B)<\delta.
\]
Since $A$ and $\delta$ are arbitrary, it is concluded that $\overline{f}$ is sensitively dependent.
\end{proof}
\begin{theorem}\label{Theorem 1}
Let $(X, f)$ be a dynamical system. Then, the following statements are equivalent:
\begin{enumerate}[(1)]
\item\label{4.2.1} $(\mathcal{K}(X), \overline{f})$ is sensitively dependent;
\item\label{4.2.2} $(\mathbb{F}^{1}(X), \widetilde{f}_{g})$ is sensitively dependent for some $g\in D_{m}(I)$
with $g^{-1}(1)=\{1\}$;
\item\label{4.2.3} $(\mathbb{F}^{1}(X), \widetilde{f}_{g})$ is sensitively dependent for any $g\in D_{m}(I)$
with $g^{-1}(1)=\{1\}$.
\end{enumerate}
\end{theorem}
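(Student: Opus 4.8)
The plan is to prove the three statements equivalent by establishing the cycle $(3)\Rightarrow(2)\Rightarrow(1)\Rightarrow(3)$. The implication $(3)\Rightarrow(2)$ is immediate: $\mathrm{id}_{I}\in D_{m}(I)$ satisfies $\mathrm{id}_{I}^{-1}(1)=\{1\}$, so a property holding for \emph{every} admissible $g$ holds for at least one. For $(2)\Rightarrow(1)$ I would run the argument of Theorem \ref{Theorem 3.1} with $\mathbb{F}^{1}(X)$ in place of $\mathbb{F}_{0}(X)$: given $A\in\mathcal{K}(X)$ and $\delta>0$, the characteristic function $\chi_{A}$ lies in $\mathbb{F}^{1}(X)$, so sensitivity of $(\mathbb{F}^{1}(X),\widetilde{f}_{g})$ yields a nearby $B\in\mathbb{F}^{1}(X)\subseteq\mathbb{F}_{0}(X)$ and an $n$ with $d_{\infty}((\widetilde{f}_{g})^{n}(\chi_{A}),(\widetilde{f}_{g})^{n}(B))>\varepsilon$, and Proposition \ref{Proposition 3.1} then produces a level $\alpha_{0}$ at which $[B]_{\xi_{g}^{n}(\alpha_{0})}$ is $\delta$-close to $A$ while $\overline{f}^{n}([B]_{\xi_{g}^{n}(\alpha_{0})})$ is more than $\varepsilon$ away from $\overline{f}^{n}(A)$. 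This direction does not even require $g^{-1}(1)=\{1\}$.

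The substance is $(1)\Rightarrow(3)$. Fix $g$ with $g^{-1}(1)=\{1\}$, so that $\xi_{g}(1)=1$; then Lemma \ref{Lemma **} says the level-one map $\pi\colon\mathbb{F}^{1}(X)\to\mathcal{K}(X)$, $\pi(A)=[A]_{1}$, intertwines $\widetilde{f}_{g}$ with $\overline{f}$, i.e. $[(\widetilde{f}_{g})^{m}(A)]_{1}=\overline{f}^{m}([A]_{1})$, and it is nonexpanding since $d_{H}([A]_{1},[B]_{1})\le d_{\infty}(A,B)$. Hence it suffices, given $A\in\mathbb{F}^{1}(X)$ and $\eta>0$, to produce $B\in\mathbb{F}^{1}(X)$ with $d_{\infty}(A,B)<\eta$ and $d_{H}(\overline{f}^{m}([A]_{1}),\overline{f}^{m}([B]_{1}))>\delta$ for some $m$, where $\delta$ is a sensitivity constant of $\overline{f}$; the intertwining relation then forces $d_{\infty}((\widetilde{f}_{g})^{m}(A),(\widetilde{f}_{g})^{m}(B))\ge d_{H}(\overline{f}^{m}([A]_{1}),\overline{f}^{m}([B]_{1}))>\delta$, so $\delta$ is a sensitivity constant for $\widetilde{f}_{g}$ on $\mathbb{F}^{1}(X)$. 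Writing $K=[A]_{1}$ (nonempty and compact by upper semicontinuity), sensitivity of $\overline{f}$ at $K$ supplies $L\in\mathcal{K}(X)$ and $m$ with $\rho:=d_{H}(K,L)\in(0,\eta/2)$ and $d_{H}(\overline{f}^{m}(K),\overline{f}^{m}(L))>\delta$, and the task reduces to realizing $L$ as the level-one cut of some $B$ close to $A$.

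The main obstacle is precisely this lifting. One cannot take $B=\max\{A,\chi_{L}\}$, since its cuts satisfy $[B]_{\beta}\supseteq K$ for all $\beta\le 1$, forcing $[B]_{1}=K\cup L$ rather than $L$, and $d_{H}(\overline{f}^{m}(K),\overline{f}^{m}(K\cup L))$ records only one of the two directed distances and may be small. In fact, upper semicontinuity forces $K\subseteq[B]_{1}$ whenever $K\subseteq[B]_{\beta}$ for all $\beta<1$, so to get $[B]_{1}=L$ one must shave a thin collar around $L$ out of the nearby cuts. I would therefore set
\[
B(x)=\max\Big\{\chi_{L}(x),\ \min\big\{A(x),\ \min\{1,\,d(x,L)/(2\rho)\}\big\}\Big\},
\]
which is upper semicontinuous, being a maximum of upper semicontinuous functions built from $\chi_{L}$, $A$, and the continuous map $x\mapsto d(x,L)$, and which lies in $\mathbb{F}^{1}(X)$ because $B\equiv 1$ on $L\neq\emptyset$. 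A direct computation gives $[B]_{\alpha}=L\cup\big([A]_{\alpha}\setminus N_{2\rho\alpha}(L)\big)$ for $\alpha\in(0,1]$, where $N_{t}(L)$ is the open $t$-neighbourhood of $L$; since each removed point lies within $2\rho$ of $L\subseteq[B]_{\alpha}$ and each point of $L$ lies within $\rho$ of $K\subseteq[A]_{\alpha}$, every level obeys $d_{H}([A]_{\alpha},[B]_{\alpha})\le 2\rho$, whence $d_{\infty}(A,B)\le 2\rho<\eta$. Finally, as $K$ lies in the $\rho$-neighbourhood of $L$, all of $K$ is shaved off at level $1$ while $L$ survives, so $[B]_{1}=L$ exactly. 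The delicate points to check in the full write-up are that the collar removal deletes only a $2\rho$-thick shell (hence never the far-away parts of the lower cuts) and that $B$ is genuinely upper semicontinuous along $\partial L$; both are handled by the explicit formula above.
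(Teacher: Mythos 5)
Your proposal is correct, and the two easy implications match the paper: $(3)\Rightarrow(2)$ is immediate (the paper says ``obvious''; your observation that $\mathrm{id}_I$ witnesses nonemptiness is the right justification), and $(2)\Rightarrow(1)$ is the paper's Theorem \ref{Theorem 3.1} argument rerun on $\mathbb{F}^{1}(X)$ --- your explicit rerun is in fact slightly more careful than the paper, which cites Theorem \ref{Theorem 3.1} directly even though its hypothesis is stated for $\mathbb{F}_{0}(X)$ rather than the subspace $\mathbb{F}^{1}(X)$. The substantive difference is in $(1)\Rightarrow(3)$. Both you and the paper share the same skeleton: by Lemma \ref{Lemma **} (this is exactly where $g^{-1}(1)=\{1\}$ enters), the level-one map $A\mapsto[A]_{1}$ is nonexpanding and intertwines $\widetilde{f}_{g}$ with $\overline{f}$, so everything reduces to perturbing $A$ into some $B\in\mathbb{F}^{1}(X)$ that is $d_{\infty}$-close to $A$ but whose top cut is the compact set supplied by sensitivity of $\overline{f}$ at $[A]_{1}$. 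The paper realizes this perturbation by approximating $A$ with a piecewise constant fuzzy set $\mathscr{A}$ (built from the inductively defined sets $D_i$, $U_i$, and invoking the proof of Lemma 1 of \cite{Kupka2011} for the estimate $d_{\infty}(A,\mathscr{A})<\delta/4$), thickening the sensitive witness $C$ to $Q$ via continuity of $\overline{f}^{n}$, and then defining a piecewise constant $E$ whose cuts are $Q$ at the top and $U_i\cup Q$ below. You instead write down a single explicit formula $B=\max\{\chi_{L},\min\{A,\min\{1,d(\cdot,L)/(2\rho)\}\}\}$, whose collar term shaves $A$ off a $2\rho$-shell around $L$ so that $[B]_{1}=L$ exactly while every cut moves by at most $2\rho$. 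Your route buys self-containedness (no appeal to density of piecewise constant fuzzy sets or to an external lemma), avoids the continuity/thickening step entirely, gives the sharper estimate $d_{\infty}(A,B)\le 2\rho$, and correctly identifies (and circumvents) the trap that the naive choice $\max\{A,\chi_{L}\}$ has top cut $K\cup L$, for which only one directed Hausdorff distance is controlled; the paper's route, in exchange, reuses machinery already developed in the fuzzification literature. Both arguments are valid; yours is arguably the cleaner proof of the key implication.
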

\begin{proof}
(\ref{4.2.2})$\Longrightarrow$(\ref{4.2.1}). This follows immediately from Theorem \ref{Theorem 3.1}.

(\ref{4.2.1})$\Longrightarrow$(\ref{4.2.3}). Let $\varepsilon>0$ be a sensitive constant of $\overline{f}$
and fix any $g\in D_{m}(I)$. For any $A\in \mathbb{F}^{1}(X)$ and any $\delta>0$,
clearly, $[A]_{1}\in \mathcal{K}(X)$. Since $\overline{f}$ is sensitive, there exist $C\in \mathcal{K}(X)$
with $d_{H}([A]_{1}, C)<\frac{\delta}{4}$ and $n\in \mathbb{Z}^{+}$ such that $d_{H}(\overline{f}^{n}([A]_{1}),
\overline{f}^{n}(C))>\varepsilon$. The continuity of $\overline{f}$ implies that there exists
$0<\xi<\delta/4$ such that for any $F\in \left\{F_{1}\in \mathcal{K}(X): d_{H}(F_{1}, C)\leq \xi\right\}
$, $d_{H}(\overline{f}^{n}(C), \overline{f}^{n}(F))<
\frac{d_{H}(\overline{f}^{n}([A]_{1}), \overline{f}^{n}(C))-\varepsilon}{2}$. Set
$Q=\overline{\left\{y\in X: \inf_{x\in C}d(x, y)<\xi\right\}}\in \mathcal{K}(X)$.
Clearly, $d_{H}(Q, C)\leq \xi$. So,
\begin{equation}\label{e-3}
d_{H}(\overline{f}^{n}([A]_{1}), \overline{f}^{n}(Q))
\geq d_{H}(\overline{f}^{n}([A]_{1}), \overline{f}^{n}(C))-d_{H}(\overline{f}^{n}(C), \overline{f}^{n}(Q))
>\varepsilon.
\end{equation}

Take $X_{1}=X$, $\alpha_{1}=\max_{x\in X_{1}}A(x)=1$ and $D_{1}=A^{-1}(\alpha_{1})\cap X_{1}=[A]_{1}$. Define inductively
$X_{i+1}=X_{i}\setminus \left\{y\in X: \inf_{x\in D_{i}}d(x, y)<\frac{\delta}{4}\right\}$, $\alpha_{i+1}=
\max_{x\in X_{i+1}}A(x)$ and $D_{i+1}=A^{-1}(\alpha_{i+1})\cap X_{i+1}$ for $i\geq 1$. The compactness
of $X$ implies that there exists $k\in \mathbb{N}$ such that $X_{k+1}=\emptyset$.
Denote ${U}_{i}=\overline{\bigcup_{j=1}^{i}\left\{y\in X: \inf_{x\in D_{j}}d(x, y)<\frac{\delta}{4}\right\}}$
for $i=1, \ldots, k$ and take a piecewise constant fuzzy set $\mathscr{A}$ satisfying $[\mathscr{A}]_{\alpha}={U}_{i}$
for $\alpha\in (\alpha_{i+1}, \alpha_{i}]$. It follows from the
proof of \cite[Lemma 1]{Kupka2011} that $d_{\infty}(A, \mathscr{A})<\frac{\delta}{4}$.

Next, take another piecewise constant fuzzy set $E\in \mathbb{F}^{1}(X)$ such
that
\[
[E]_{\alpha}=\left\{\begin{array}{cc}
Q, & {\rm } \ \alpha\in (\frac{\alpha_{2}+\alpha_{1}}{2}, \alpha_{1}], \\\
{U}_{1}\cup Q, & {\rm } \ \alpha\in (\alpha_{2}, \frac{\alpha_{2}+\alpha_{1}}{2}], \\\
{U}_{i}\cup Q, & {\rm } \ \alpha\in (\alpha_{i+1}, \alpha_{i}], i\in \{2, \ldots, k\}.
\end{array}
\right.
\]
It can be verified that
\[
d_{H}(Q, {U}_{1})\leq d_{H}(Q, C)+d_{H}(C, [A]_{1})+d_{H}([A]_{1}, {U}_{1})\leq \xi
+\frac{\delta}{4}+\frac{\delta}{4}<\frac{3\delta}{4}.
\]
From this, it follows that $d_{\infty}(\mathscr{A}, E)<\frac{3\delta}{4}$,
so that
\[
d_{\infty}(A, E)\leq d_{\infty}(A, \mathscr{A})+d_{\infty}(\mathscr{A}, E)<\delta.
\]
Now, applying Lemma \ref{Lemma **} and \eqref{e-3}, one has
\begin{eqnarray*}
d_{\infty}((\widetilde{f}_{g})^{n}(A), (\widetilde{f}_{g})^{n}(E))
& \geq & d_{H}(\left[(\widetilde{f}_{g})^{n}(A)\right]_{1}, \left[(\widetilde{f}_{g})^{n}(E)\right]_{1})\\
& = & d_{H}\left(\overline{f}^{n}([A]_{1}), \overline{f}^{n}([E]_{1})\right)=
d_{H}\left(\overline{f}^{n}([A]_{1}), \overline{f}^{n}(Q)\right)>\varepsilon.
\end{eqnarray*}

(\ref{4.2.3})$\Longrightarrow$(\ref{4.2.2}). It is obvious.
\end{proof}

Theorem \ref{Theorem 1}, together with the fact that $\widetilde{f}=\widetilde{f}_{\mathrm{id}_{I}}$,
yields the following result.

\begin{corollary}
Let $(X, f)$ be a dynamical system. Then $(\mathcal{K}(X), \overline{f})$ is sensitively
dependent if and only if $(\mathbb{F}^{1}(X), \widetilde{f})$ is sensitively dependent.
\end{corollary}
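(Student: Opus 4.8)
The plan is to derive this corollary as the single instance $g = \mathrm{id}_I$ of Theorem~\ref{Theorem 1}, using the identity $\widetilde{f} = \widetilde{f}_{\mathrm{id}_I}$ recorded in the definition of $g$-fuzzification. All of the substantive work has already been carried out in establishing the three-way equivalence of Theorem~\ref{Theorem 1}; what remains is only to confirm that the identity map $\mathrm{id}_I$ is an admissible choice of $g$, so that both required implications of the theorem apply verbatim.

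First I would verify that $\mathrm{id}_I$ lies in $D_m(I)$ and satisfies the side condition occurring in Theorem~\ref{Theorem 1}. Indeed, $\mathrm{id}_I \colon I \to I$ is continuous (hence right-continuous) and nondecreasing, and it fixes the endpoints, so $\mathrm{id}_I(0) = 0$ and $\mathrm{id}_I(1) = 1$; thus $\mathrm{id}_I \in D_m(I)$. Moreover $(\mathrm{id}_I)^{-1}(1) = \{x \in I : x = 1\} = \{1\}$, so the hypothesis $g^{-1}(1) = \{1\}$ holds for $g = \mathrm{id}_I$. This is the only verification needed, and it is immediate.

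With this in hand the two directions follow at once. If $(\mathcal{K}(X), \overline{f})$ is sensitively dependent, then statement~(\ref{4.2.1}) of Theorem~\ref{Theorem 1} holds, and the implication (\ref{4.2.1})$\Longrightarrow$(\ref{4.2.3}) yields sensitivity of $(\mathbb{F}^1(X), \widetilde{f}_g)$ for every admissible $g$; specializing to $g = \mathrm{id}_I$ and recalling $\widetilde{f} = \widetilde{f}_{\mathrm{id}_I}$ shows that $(\mathbb{F}^1(X), \widetilde{f})$ is sensitively dependent. Conversely, sensitivity of $(\mathbb{F}^1(X), \widetilde{f}) = (\mathbb{F}^1(X), \widetilde{f}_{\mathrm{id}_I})$ is exactly statement~(\ref{4.2.2}) for the admissible choice $g = \mathrm{id}_I$, and (\ref{4.2.2})$\Longrightarrow$(\ref{4.2.1}) returns sensitivity of $(\mathcal{K}(X), \overline{f})$. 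I do not anticipate any genuine obstacle here, since the corollary is a direct specialization of the theorem; the only pitfall would be neglecting to check that $\mathrm{id}_I$ meets the standing hypotheses, which the verification above settles.
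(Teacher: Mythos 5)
Your proposal is correct and matches the paper's own derivation exactly: the paper also obtains this corollary by specializing Theorem~\ref{Theorem 1} to $g=\mathrm{id}_I$ via the identity $\widetilde{f}=\widetilde{f}_{\mathrm{id}_I}$, only stating it more tersely. Your explicit check that $\mathrm{id}_I\in D_m(I)$ and $(\mathrm{id}_I)^{-1}(1)=\{1\}$ is the right (and only) verification needed, which the paper leaves implicit.
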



To close this section, we apply Theorem \ref{Theorem 3.1} and \cite[Proposition 2.1, Proposition 2.2]{Liu}
to construct a counterexample which gives a negative
answer to Question~\ref{Question 1} above.

\begin{example}\label{e-1-1}
Let $\mathbb{R}/\mathbb{Z}$ be the domain of the unite circle $\mathbb{S}^{1}$. Define
a metric $d$ by $d(a, b)=\min\{|a-b|, 1-|a-b|\}$. Then, the
rigid rotation $R_{\alpha}: \mathbb{S}^{1}\longrightarrow \mathbb{S}^{1}$
by a real number $\alpha$ is given by
\[
R_{\alpha}(t)=t+\alpha \ (\mathrm{mod}\ 1), \text{ for all } t\in \mathbb{R}.
\]
Corresponding to the irrational $\alpha$, the Denjoy homeomorphism $D_{\alpha}:
\mathbb{S}^{1}\longrightarrow \mathbb{S}^{1}$ is an orientation preserving homeomorphism
of the circle characterized by the following properties:
\begin{enumerate}
\item the rotational number of $D_{\alpha}$ is $\alpha$;
\item there is a Cantor set $C_{\alpha}\subset \mathbb{S}^{1}$ such that $D_{\alpha}|_{C_{\alpha}}$
is minimal.
\end{enumerate}

In \cite[Proposition 2.1, Proposition 2.2]{Liu}, Liu et al. proved that
$(C_{\alpha}, D_{\alpha}|_{C_{\alpha}})$ is sensitively dependent but
its set-valued dynamical system $(\mathcal{K}(C_{\alpha}), \overline{D_{\alpha}|_{C_{\alpha}}})$
is not sensitively dependent. This, together with Theorem~\ref{Theorem 3.1}, implies
that for every $g\in D_{m}(I)$, the $g$-fuzzification of $(C_{\alpha}, D_{\alpha}|_{C_{\alpha}})$
is not sensitively dependent. This shows that the answer to Question~\ref{Question 1} is negative.
\end{example}

\section{$\mathscr{F}$-sensitivity and multi-sensitivity of $g$-fuzzification}\label{S-5}

As an extension of the last section, this section is devoted to studying $\mathscr{F}$-sensitivity
and multi-sensitivity of $g$-fuzzification.

\begin{theorem}\label{F-sensitivity}
Let $(X, f)$ be a system and $\mathscr{F}$ be a Furstenberg family. Then,
the following statements are equivalent:
\begin{enumerate}[(1)]
\item\label{5.1.1} $(\mathcal{K}(X), \overline{f})$ is $\mathscr{F}$-sensitive;
\item\label{5.1.2} $(\mathbb{F}^{1}(X), \widetilde{f}_{g})$ is $\mathscr{F}$-sensitive for some $g\in D_{m}(I)$
with $g^{-1}(1)=\{1\}$;
\item\label{5.1.3} $(\mathbb{F}^{1}(X), \widetilde{f}_{g})$ is $\mathscr{F}$-sensitive for any $g\in D_{m}(I)$
with $g^{-1}(1)=\{1\}$.
\end{enumerate}
\end{theorem}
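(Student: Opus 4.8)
The plan is to prove the cycle (\ref{5.1.2})$\Rightarrow$(\ref{5.1.1})$\Rightarrow$(\ref{5.1.3})$\Rightarrow$(\ref{5.1.2}), following the architecture of Theorem \ref{Theorem 1} but now keeping track of the \emph{set of times} at which large-diameter images occur. In each of the two substantive implications the engine is the same: I exhibit a set inclusion between the time-sets $\{n:\mathrm{diam}((\cdot)^{n}(\text{open set}))>\text{const}\}$ attached to the two systems, and then invoke the hereditary-upward axiom of $\mathscr{F}$ to transport membership in $\mathscr{F}$ from the smaller set to the larger one. Because $\mathscr{F}$-sensitivity only demands the \emph{existence} of a constant, I am free to let the constant shrink by a bounded factor along the way.

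For (\ref{5.1.2})$\Rightarrow$(\ref{5.1.1}), let $\varepsilon$ be an $\mathscr{F}$-sensitivity constant for $\widetilde{f}_{g}$. Given a nonempty open $\mathcal{U}\subset\mathcal{K}(X)$, I fix $A\in\mathcal{U}$ and $\delta>0$ with $B_{d_{H}}(A,\delta)\subset\mathcal{U}$, and pass to the nonempty open set $\mathcal{V}=\{B\in\mathbb{F}^{1}(X):d_{\infty}(\chi_{A},B)<\delta\}$ (nonempty since $\chi_{A}\in\mathbb{F}^{1}(X)$). For each $n$ with $\mathrm{diam}((\widetilde{f}_{g})^{n}(\mathcal{V}))>\varepsilon$ there are $B_{1},B_{2}\in\mathcal{V}$ whose $n$-th images are $d_{\infty}$-separated by more than $\varepsilon$; choosing a level $\alpha_{0}\in(0,1]$ at which the supremum defining $d_{\infty}$ already exceeds $\varepsilon$ and applying Proposition \ref{Proposition 3.1} turns this into $d_{H}(\overline{f}^{n}([B_{1}]_{\beta}),\overline{f}^{n}([B_{2}]_{\beta}))>\varepsilon$, where $\beta=\xi_{g}^{n}(\alpha_{0})\in(0,1]$. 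Since $d_{H}(A,[B_{i}]_{\beta})=d_{H}([\chi_{A}]_{\beta},[B_{i}]_{\beta})\le d_{\infty}(\chi_{A},B_{i})<\delta$, both $[B_{i}]_{\beta}$ lie in $\mathcal{U}$, so $\mathrm{diam}(\overline{f}^{n}(\mathcal{U}))>\varepsilon$. Thus $\{n:\mathrm{diam}((\widetilde{f}_{g})^{n}(\mathcal{V}))>\varepsilon\}\subset\{n:\mathrm{diam}(\overline{f}^{n}(\mathcal{U}))>\varepsilon\}$ and hereditary upwardness finishes the step. (This direction uses only Proposition \ref{Proposition 3.1}, hence holds for every $g\in D_{m}(I)$.)

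For (\ref{5.1.1})$\Rightarrow$(\ref{5.1.3}), fix $g$ with $g^{-1}(1)=\{1\}$ and let $\varepsilon$ be an $\mathscr{F}$-sensitivity constant for $\overline{f}$. Given a nonempty open $\mathcal{V}\subset\mathbb{F}^{1}(X)$, I pick $A\in\mathcal{V}$ and $\delta>0$ with the $d_{\infty}$-ball of radius $\delta$ about $A$ contained in $\mathcal{V}$, and apply $\mathscr{F}$-sensitivity of $\overline{f}$ to $\mathcal{W}=B_{d_{H}}([A]_{1},\delta/4)$, getting $T=\{n:\mathrm{diam}(\overline{f}^{n}(\mathcal{W}))>\varepsilon\}\in\mathscr{F}$. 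For each $n\in T$, since $[A]_{1}\in\mathcal{W}$, a triangle inequality yields some $K\in\mathcal{W}$ with $d_{H}(\overline{f}^{n}([A]_{1}),\overline{f}^{n}(K))>\varepsilon/2$. The main obstacle is the next move: I must lift $K$ to a genuine fuzzy set $E\in\mathcal{V}$ whose level-$1$ set is a prescribed closed neighborhood $Q$ of $K$ while keeping $d_{\infty}(A,E)<\delta$. This is exactly the piecewise-constant construction performed in the proof of Theorem \ref{Theorem 1} (first building $\mathscr{A}$, then $E$): the key technical point is that folding $Q$ into all the lower level sets keeps them $d_{\infty}$-close to those of $A$, whereas a naive replacement of $[A]_{1}$ by $K$ would destroy control at the levels just below $1$. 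Shrinking the neighborhood radius (using continuity of $\overline{f}^{n}$) so that $d_{H}(\overline{f}^{n}(K),\overline{f}^{n}(Q))<\varepsilon/4$, and invoking Lemma \ref{Lemma **} to write $[(\widetilde{f}_{g})^{n}(A)]_{1}=f^{n}([A]_{1})$ and $[(\widetilde{f}_{g})^{n}(E)]_{1}=f^{n}(Q)$, I obtain $d_{\infty}((\widetilde{f}_{g})^{n}(A),(\widetilde{f}_{g})^{n}(E))\ge d_{H}(\overline{f}^{n}([A]_{1}),\overline{f}^{n}(Q))>\varepsilon/4$. As $A,E\in\mathcal{V}$, this gives $\mathrm{diam}((\widetilde{f}_{g})^{n}(\mathcal{V}))>\varepsilon/4$, whence $T\subset\{n:\mathrm{diam}((\widetilde{f}_{g})^{n}(\mathcal{V}))>\varepsilon/4\}\in\mathscr{F}$, so $\widetilde{f}_{g}$ is $\mathscr{F}$-sensitive with constant $\varepsilon/4$.

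Finally, (\ref{5.1.3})$\Rightarrow$(\ref{5.1.2}) is immediate, since $g=\mathrm{id}_{I}$ satisfies $g^{-1}(1)=\{1\}$, so the ``for any'' statement is non-vacuous and specializes to the ``for some'' statement. The one genuinely delicate ingredient is the lifting in the third paragraph; everything else is a mechanical transfer of a set-inclusion through the hereditary-upward property of $\mathscr{F}$, and the latitude to replace $\varepsilon$ by $\varepsilon/4$ means no sharp tracking of constants is needed.
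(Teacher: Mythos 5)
Your proof is correct and follows essentially the same route as the paper's: the identical cycle (\ref{5.1.2})$\Rightarrow$(\ref{5.1.1})$\Rightarrow$(\ref{5.1.3})$\Rightarrow$(\ref{5.1.2}), with the downward implication obtained by projecting a separated pair to $\alpha$-cuts via Proposition~\ref{Proposition 3.1} (as in Theorem~\ref{Theorem 3.1}), the upward implication obtained by the piecewise-constant lifting of Theorem~\ref{Theorem 1} together with Lemma~\ref{Lemma **}, and in both cases the time set transported through the hereditary-upward property of $\mathscr{F}$. The only differences are cosmetic: you spell out details the paper compresses into ``similarly to the proof of Theorem~\ref{Theorem 3.1}/Theorem~\ref{Theorem 1}'', and you settle for the sensitivity constant $\varepsilon/4$ where the paper keeps $\varepsilon/2$, which is immaterial for $\mathscr{F}$-sensitivity.
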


\begin{proof}
Similarly to the proof of Theorem \ref{Theorem 3.1}, it can be verified that
(\ref{5.1.3})$\Longrightarrow$ (\ref{5.1.2}) $\Longrightarrow$
(\ref{5.1.2}). It suffices to check that (\ref{5.1.1}) $\Longrightarrow$ (\ref{5.1.3}).

Let $\varepsilon>0$ be a $\mathscr{F}$-sensitive constant of $\overline{f}$
and fix any $g\in D_{m}(I)$. For any $A\in \mathbb{F}^{1}(X)$ and any $\delta>0$,
the $\mathscr{F}$-sensitivity of $\overline{f}$ implies that there exists
$F\in \mathscr{F}$ such that for any $n\in F$, there exists $C\in \mathcal{K}(X)$
with $d_{H}([A]_{1}, C)<\frac{\delta}{4}$ satisfying $d_{H}(\overline{f}^{n}([A]_{1}),
\overline{f}^{n}(C))>\frac{\varepsilon}{2}$. Similarly to the proof of Theorem \ref{Theorem 1},
it follows that there exists $E\in \mathbb{F}^{1}(X)$ such that $d_{\infty}(A, E)<\delta$
and $d_{\infty}((\widetilde{f}_{g})^{n}(A), (\widetilde{f}_{g})^{n}(E))>\varepsilon/ 2$.
This implies that $F\subset \left\{n\in \mathbb{Z}^{+}:
\mathrm{diam}((\widetilde{f}_{g})^{n}(B_{d_{\infty}}(A, \delta)))>\varepsilon/ 2\right\}\in \mathscr{F}$.
So, $(\mathbb{F}^{1}(X), \widetilde{f}_{g})$ is  $\mathscr{F}$-sensitive,
as $A$ and $\delta$ are arbitrary.
\end{proof}

Combining Theorem \ref{F-sensitivity} with \cite[Theorem 4]{WWC}, one can immediately obtain the following.

\begin{corollary}
Let $(X, f)$ be a system and $\mathscr{F}$ be a filter. Then,
the following statements are equivalent:
\begin{enumerate}[(1)]
\item $(X, f)$ is $\mathscr{F}$-sensitive;

\item $(\mathcal{K}(X), \overline{f})$ is $\mathscr{F}$-sensitive;

\item $(\mathbb{F}^{1}(X), \widetilde{f}_{g})$ is $\mathscr{F}$-sensitive for some $g\in D_{m}(I)$
with $g^{-1}(1)=\{1\}$;

\item $(\mathbb{F}^{1}(X), \widetilde{f}_{g})$ is $\mathscr{F}$-sensitive for any $g\in D_{m}(I)$
with $g^{-1}(1)=\{1\}$.
\end{enumerate}
\end{corollary}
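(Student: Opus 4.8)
The plan is to obtain the corollary purely by assembling Theorem \ref{F-sensitivity} with the two results of \cite{WWC} quoted in the Introduction, with no new computation. The key observation is that a filter is in particular a (proper) Furstenberg family, so Theorem \ref{F-sensitivity} applies verbatim and already delivers the equivalence of statements (2), (3), and (4); indeed that theorem establishes this equivalence for an \emph{arbitrary} Furstenberg family and makes no use of the filter hypothesis. Consequently the only content left to supply is the equivalence of (1) with (2), i.e. that $\mathscr{F}$-sensitivity of the base system $(X,f)$ coincides with $\mathscr{F}$-sensitivity of its set-valued system $(\mathcal{K}(X),\overline{f})$ when $\mathscr{F}$ is a filter.

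For the equivalence $(1)\Longleftrightarrow(2)$ I would invoke the two cited facts directly. The implication $(2)\Longrightarrow(1)$ is \cite[Corollary 1]{WWC}, which gives that $\mathscr{F}$-sensitivity of $(\mathcal{K}(X),\overline{f})$ forces $\mathscr{F}$-sensitivity of $(X,f)$ for every Furstenberg family, so no hypothesis beyond properness is needed there. The converse $(1)\Longrightarrow(2)$ is exactly \cite[Theorem 4]{WWC}, and this is the single place in the whole argument where the filter assumption is genuinely used: passing from sensitivity of individual orbits to sensitivity of the collective (hyperspace) dynamics requires controlling finite intersections of the relevant sensitivity-time sets, and the defining property $\mathscr{F}\cdot\mathscr{F}\subset\mathscr{F}$ of a filter is precisely what keeps those intersections inside $\mathscr{F}$.

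With $(1)\Longleftrightarrow(2)$ from \cite{WWC} and $(2)\Longleftrightarrow(3)\Longleftrightarrow(4)$ from Theorem \ref{F-sensitivity}, I would simply note that all four statements are equivalent to (2) and conclude. The task is therefore bookkeeping of implications rather than analysis, and the only real point requiring care is verification rather than proof: one must check that the notion of $\mathscr{F}$-sensitivity appearing in \cite[Theorem 4, Corollary 1]{WWC} agrees with that of Definition \ref{D-1} and that \cite[Theorem 4]{WWC} is indeed stated under the filter hypothesis for $\mathscr{F}$. Once that alignment is confirmed, the corollary follows immediately, and I would present it as a one-line assembly of the two references with an explicit remark pointing out that the filter hypothesis is invoked only for the step $(1)\Longrightarrow(2)$.
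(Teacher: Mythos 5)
Your proposal is correct and matches the paper's own argument: the paper derives this corollary precisely by combining Theorem \ref{F-sensitivity} (which gives the equivalence of (2), (3), (4) for an arbitrary Furstenberg family) with the cited results of \cite{WWC} (Corollary 1 for (2)$\Rightarrow$(1), and Theorem 4, where the filter hypothesis enters, for (1)$\Rightarrow$(2)). Your additional remark isolating exactly where the filter assumption is used is a faithful reading of the same assembly, not a different route.
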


Slightly modifying the proofs of Theorem~ \ref{Theorem 1} and Theorem~\ref{F-sensitivity}
and applying \cite[Theorem 3.2, Theorem 3.3]{Li2012},
one can prove the following.

\begin{theorem}\label{multi-sensitivity}
Let $(X, f)$ be a system. Then,
the following statements are equivalent:
\begin{enumerate}[(1)]
\item $(X, f)$ is multi-sensitive;

\item $(\mathcal{K}(X), \overline{f})$ is multi-sensitive;

\item $(\mathbb{F}^{1}(X), \widetilde{f}_{g})$ is multi-sensitive for some $g\in D_{m}(I)$
with $g^{-1}(1)=\{1\}$;

\item $(\mathbb{F}^{1}(X), \widetilde{f}_{g})$ is multi-sensitive for any $g\in D_{m}(I)$
with $g^{-1}(1)=\{1\}$.
\end{enumerate}
\end{theorem}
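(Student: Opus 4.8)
The plan is to treat the equivalence of $(X,f)$ and $(\mathcal{K}(X),\overline{f})$ separately from the equivalence with the fuzzifications. For the former I would simply invoke \cite[Theorem 3.2, Theorem 3.3]{Li2012}, which already give that $(X,f)$ is multi-sensitive if and only if $(\mathcal{K}(X),\overline{f})$ is multi-sensitive; this settles $(1)\Longleftrightarrow(2)$. It then remains to prove $(2)\Longleftrightarrow(3)\Longleftrightarrow(4)$. Since $(4)\Longrightarrow(3)$ is trivial, I would establish $(2)\Longrightarrow(4)$ by adapting the proof of the implication $(\ref{4.2.1})\Longrightarrow(\ref{4.2.3})$ in Theorem \ref{Theorem 1}, and $(3)\Longrightarrow(2)$ by adapting the proof of Theorem \ref{Theorem 3.1}. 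Throughout, the only genuinely new feature compared with the sensitivity and $\mathscr{F}$-sensitivity theorems is that multi-sensitivity requires a \emph{single} time $n$ that works simultaneously for a whole finite family of open sets; the point will be that the multi-sensitivity of the ``source'' system already delivers such a uniform $n$, and that the exact levelwise transfers in Lemma \ref{Lemma **} and Proposition \ref{Proposition 3.1} preserve it.

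For $(2)\Longrightarrow(4)$, fix $g\in D_m(I)$ with $g^{-1}(1)=\{1\}$ and let $\varepsilon>0$ be a multi-sensitive constant of $\overline{f}$. Given $k\in\mathbb{N}$ and nonempty open sets $\mathcal{V}_1,\dots,\mathcal{V}_k\subset\mathbb{F}^1(X)$, I would choose for each $i$ a center $A_i\in\mathcal{V}_i$ and a radius $\delta_i>0$ with $B_{d_\infty}(A_i,\delta_i)\subset\mathcal{V}_i$, and then consider the nonempty open hyperspace sets $B_{d_H}([A_i]_1,\delta_i/4)\subset\mathcal{K}(X)$ (recall $[A_i]_1\in\mathcal{K}(X)$ because $A_i\in\mathbb{F}^1(X)$). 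Applying the multi-sensitivity of $\overline{f}$ to this finite family produces one $n\in\mathbb{Z}^+$ with $\mathrm{diam}(\overline{f}^n(B_{d_H}([A_i]_1,\delta_i/4)))>\varepsilon$ for every $i$, whence, anchoring at $\overline{f}^n([A_i]_1)$, some $C_i$ within $\delta_i/4$ of $[A_i]_1$ satisfies $d_H(\overline{f}^n([A_i]_1),\overline{f}^n(C_i))>\varepsilon/2$. Lifting each $C_i$ to a piecewise constant $E_i\in\mathbb{F}^1(X)$ exactly as $E$ is built from $C$ in the proof of Theorem \ref{Theorem 1} gives $E_i\in B_{d_\infty}(A_i,\delta_i)\subset\mathcal{V}_i$ with $[E_i]_1$ controllably close to $C_i$; Lemma \ref{Lemma **} then yields $d_\infty((\widetilde{f}_g)^n(A_i),(\widetilde{f}_g)^n(E_i))\ge d_H(\overline{f}^n([A_i]_1),\overline{f}^n([E_i]_1))$, which (after the usual continuity estimate) stays above, say, $\varepsilon/4$. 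Since this holds for the same $n$ and all $i$, the diameters $\mathrm{diam}((\widetilde{f}_g)^n(\mathcal{V}_i))$ all exceed $\varepsilon/4$, giving $(4)$.

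For $(3)\Longrightarrow(2)$, let $\varepsilon'>0$ be a multi-sensitive constant of $(\mathbb{F}^1(X),\widetilde{f}_g)$. Given nonempty open $\mathcal{U}_1,\dots,\mathcal{U}_k\subset\mathcal{K}(X)$, pick centers $C_i\in\mathcal{U}_i$ and radii $\eta_i$ with $B_{d_H}(C_i,\eta_i)\subset\mathcal{U}_i$, and feed the fuzzy balls $B_{d_\infty}(\chi_{C_i},\eta_i)$ into the multi-sensitivity of $\widetilde{f}_g$ to obtain a common $n$ and, for each $i$, some $B_i$ in the $i$-th ball with $d_\infty((\widetilde{f}_g)^n(\chi_{C_i}),(\widetilde{f}_g)^n(B_i))>\varepsilon'/2$. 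By Proposition \ref{Proposition 3.1} there is a level $\alpha_i\in(0,1]$ with $d_H(\overline{f}^n([\chi_{C_i}]_{\xi_g^n(\alpha_i)}),\overline{f}^n([B_i]_{\xi_g^n(\alpha_i)}))>\varepsilon'/2$; since $[\chi_{C_i}]_\beta=C_i$ for every $\beta\in(0,1]$ and $[B_i]_{\xi_g^n(\alpha_i)}$ lies within $\eta_i$ of $C_i$, both cuts belong to $\mathcal{U}_i$, so $\mathrm{diam}(\overline{f}^n(\mathcal{U}_i))>\varepsilon'/2$ for all $i$ with the same $n$. This is exactly the multi-sensitivity of $\overline{f}$.

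I expect the main obstacle to be organizational rather than conceptual: verifying that the per-index constructions (the piecewise constant lift $C_i\mapsto E_i$ respecting the nesting of level cuts, and the per-index choice of level $\alpha_i$) can all be carried out against the \emph{one} time $n$ handed down by the source system, and that the radii $\delta_i,\eta_i$ are chosen so that the perturbed objects genuinely land inside the prescribed balls. The reason there is no real difficulty in the simultaneity is that Lemma \ref{Lemma **} and Proposition \ref{Proposition 3.1} hold for every $n$ and every member of the family at once, so the uniform $n$ supplied by the source system is transported intact; the only cost is the harmless replacement of the constant $\varepsilon$ by $\varepsilon/2$ (or $\varepsilon/4$), which is irrelevant to the definition of multi-sensitivity.
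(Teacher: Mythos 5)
Your proposal is correct and follows exactly the route the paper intends: the paper's own ``proof'' of Theorem~\ref{multi-sensitivity} is a one-line instruction to invoke \cite[Theorem 3.2, Theorem 3.3]{Li2012} for $(1)\Longleftrightarrow(2)$ and to slightly modify the proofs of Theorem~\ref{Theorem 1} and Theorem~\ref{F-sensitivity} for the fuzzification implications, which is precisely your decomposition. Your write-up correctly identifies and resolves the only nontrivial point in that modification, namely that the single time $n$ supplied by multi-sensitivity of the source system is transported intact through Lemma~\ref{Lemma **} and Proposition~\ref{Proposition 3.1} applied per index, at the harmless cost of halving the sensitivity constant.
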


\section{Transitivity of $g$-fuzzification}\label{S-6}

For the weakly mixing property of $g$-fuzzification, we have the following result:
\begin{theorem}\label{Theorem **}
Let $(X, f)$ be a dynamical system, and $g\in D_{m}(I)$. If
$(\mathbb{F}^{1}(X), \widetilde{f}_{g})$ is transitive, then
$(\mathcal{K}(X), \overline{f})$ is weakly mixing.
\end{theorem}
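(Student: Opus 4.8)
The plan is to reduce the weak mixing of $\overline{f}$ to its mere transitivity and then to extract that transitivity from the transitivity of $\widetilde{f}_{g}$ by passing to level cuts. Recall from the Introduction that Banks \cite{Banks2005} proved $\overline{f}$ is transitive if and only if $\overline{f}$ is weakly mixing (equivalently, $f$ is weakly mixing). Hence it suffices to prove that $(\mathcal{K}(X), \overline{f})$ is transitive, after which Banks' theorem upgrades the conclusion to weak mixing. This reduction is what accounts for the extra strength ``weakly mixing'' in the statement.

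The main device is the isometric, forward-invariant copy of $\mathcal{K}(X)$ sitting inside $\mathbb{F}^{1}(X)$ via characteristic functions. For $A\in \mathcal{K}(X)$ one has $[\chi_{A}]_{\alpha}=A$ for every $\alpha\in (0,1]$, so $\chi_{A}\in \mathbb{F}^{1}(X)$ and $d_{\infty}(\chi_{A}, B)=\sup_{\alpha\in(0,1]}d_{H}(A, [B]_{\alpha})$ for every $B\in\mathbb{F}^{1}(X)$; thus a small $d_{\infty}$-ball around $\chi_{A}$ controls \emph{all} cuts of its members simultaneously. Moreover, Proposition~\ref{Proposition 3.1} gives, for any fuzzy set $B$, the level transport rule $\left[(\widetilde{f}_{g})^{n}(B)\right]_{\alpha}=f^{n}([B]_{\xi_{g}^{n}(\alpha)})$, and since $\xi_{g}$ maps $(0,1]$ into $(0,1]$, the level $\xi_{g}^{n}(1)$ is again a legitimate level in $(0,1]$.

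Now I would argue as follows. Fix nonempty open sets $\mathcal{G}_{1}, \mathcal{G}_{2}\subset \mathcal{K}(X)$, choose $A_{1}\in \mathcal{G}_{1}$, $A_{2}\in \mathcal{G}_{2}$ and $r>0$ with $B_{d_{H}}(A_{i}, r)\subset \mathcal{G}_{i}$. The $d_{\infty}$-balls $B_{d_{\infty}}(\chi_{A_{1}}, r)$ and $B_{d_{\infty}}(\chi_{A_{2}}, r)$ are nonempty open subsets of $\mathbb{F}^{1}(X)$, so transitivity of $\widetilde{f}_{g}$ yields $n\in \mathbb{Z}^{+}$ and $B\in B_{d_{\infty}}(\chi_{A_{1}}, r)$ with $(\widetilde{f}_{g})^{n}(B)\in B_{d_{\infty}}(\chi_{A_{2}}, r)$. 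Put $C=[B]_{\xi_{g}^{n}(1)}$. Because $d_{\infty}(\chi_{A_{1}}, B)<r$, every cut of $B$ lies within $d_{H}$-distance $r$ of $A_{1}$; in particular $C\in B_{d_{H}}(A_{1}, r)\subset \mathcal{G}_{1}$. On the other hand, Proposition~\ref{Proposition 3.1} gives $\overline{f}^{n}(C)=f^{n}([B]_{\xi_{g}^{n}(1)})=\left[(\widetilde{f}_{g})^{n}(B)\right]_{1}$, and $d_{\infty}((\widetilde{f}_{g})^{n}(B), \chi_{A_{2}})<r$ forces $d_{H}(\left[(\widetilde{f}_{g})^{n}(B)\right]_{1}, A_{2})<r$, whence $\overline{f}^{n}(C)\in B_{d_{H}}(A_{2}, r)\subset \mathcal{G}_{2}$. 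Thus $\overline{f}^{n}(\mathcal{G}_{1})\cap \mathcal{G}_{2}\neq \emptyset$, proving that $\overline{f}$ is transitive, and Banks' theorem then delivers weak mixing.

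The step that needs the most care is the bookkeeping of levels: one must match the single level $\alpha=1$ used for the image cut with the shifted level $\xi_{g}^{n}(1)$ used for the domain cut, and exploit that the $d_{\infty}$ bound constrains all cuts at once rather than a single one. I should also dispose of the degenerate hit time $n=0$ (for which Proposition~\ref{Proposition 3.1} is stated only for $n\in \mathbb{N}$); this case is harmless, since then $(\widetilde{f}_{g})^{0}=\mathrm{id}$ and the identity $\overline{f}^{0}(C)=[B]_{1}$ holds trivially with $\xi_{g}^{0}=\mathrm{id}$. Everything else is routine estimation with the levelwise metric.
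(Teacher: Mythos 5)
Your proof is correct and follows essentially the same route as the paper's: reduce weak mixing to transitivity of $\overline{f}$ via Banks' theorem, embed $\mathcal{K}(X)$ into $\mathbb{F}^{1}(X)$ by characteristic functions, and use Proposition~\ref{Proposition 3.1} to transport a level cut of the transitive hit back to $\mathcal{K}(X)$ (the paper works at level $\tfrac{1}{2}$ instead of level $1$, an immaterial difference, and your explicit disposal of the $n=0$ case is a small extra care the paper omits).
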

\begin{proof}
Applying \cite[Theorem 2]{Banks2005}, it suffices to prove that $\overline{f}$
is transitive.

For any pair of nonempty open subsets ${U}, {V}\subset \mathcal{K}(X)$,
there exist $A\in {U}$, $B\in {V}$ and $\delta>0$ such that
$B_{d_{H}}(A, \delta)\subset {U}$ and
$B_{d_{H}}(B, \delta)\subset {V}$. Noting that $B_{d_{\infty}}(\chi_{A}, \delta)$
and $B_{d_{\infty}}(\chi_{B}, \delta)$ are nonempty subsets of $\mathbb{F}^{1}(X)$,
since $\widetilde{f}_{g}$
is transitive, there exists $n\in \mathbb{Z}^{+}$ such that
$(\widetilde{f}_{g})^{n}(B_{d_{\infty}}(\chi_A, \delta))\bigcap B_{d_{\infty}}(\chi_B,
\delta)\neq \emptyset$.
Then, there exists a point $F_{1}\in B_{d_{\infty}}(\chi_A, \delta)$ such that $(\widetilde{f}_{g})^{n}
(F_{1})\in B_{d_{\infty}}(\chi_B, \delta)$. This implies that, for any
$\alpha\in (0, 1]$,
\begin{equation}\label{1}
d_{H}(\left[(\widetilde{f}_{g})^{n}(F_{1})\right]_{\alpha}, B)<\delta.
\end{equation}
In particular, applying Proposition \ref{Proposition 3.1}, it follows that
there exists $\xi\in (0, 1]$ such that
\begin{equation}\label{2}
\left[(\widetilde{f}_{g})^{n}(F_{1})\right]_{1/2}
=\overline{f}^{n}([F_{1}]_{\xi}).
\end{equation}
Since $F_{1}\in B_{d_{\infty}}(\chi_A, \delta)$,
it is easy to see that $d_{H}(A, [F_{1}]_{\xi})<\delta$, i.e., $[F_{1}]_{\xi}\in B_{d_{H}}
(A, \delta)\subset {U}$. Combining this with \eqref{1} and \eqref{2},
it follows that
\[
\overline{f}^{n}([F_{1}]_{\xi})\in \overline{f}^{n}({U})\cap {V}\neq \emptyset.
\]
\end{proof}

Being the end of this section, we shall prove that there exists $g\in D_{m}(I)$
such that the $g$-fuzzification system of every nontrivial dynamical system
is not transitive, giving a partial answer to Question \ref{Question 1}.
The following lemma is obvious.

\begin{lemma}\label{Lem-Transitivity}
A dynamical system $(X, f)$ is transitive if and only if for every pair of nonempty
open subsets $U, V$ of $X$, $N(U, V)\in \mathscr{F}_{inf}$.
\end{lemma}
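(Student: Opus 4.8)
The plan is to prove the two implications separately, treating the reverse implication as immediate and concentrating effort on the forward one. For the reverse direction, if $N(U,V) \in \mathscr{F}_{inf}$ for every pair of nonempty open sets $U, V$, then in particular each $N(U,V)$ is infinite, hence nonempty, which is exactly transitivity; so this direction costs nothing. For the forward direction I would assume $(X,f)$ is transitive and fix nonempty open $U, V$. Since $\mathscr{F}_{inf}$ is the family of all infinite subsets of $\mathbb{Z}^{+}$, the real goal is to strengthen the bare fact $N(U,V) \neq \emptyset$ to: for every $M \in \mathbb{N}$ there is some $n \geq M$ with $f^{n}(U) \cap V \neq \emptyset$.

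The core of the forward direction is to exploit a point with dense forward orbit. First I would record the standard reformulation that transitivity means $\bigcup_{n \geq 0} f^{-n}(V)$ is dense for every nonempty open $V$, and invoke the Birkhoff transitivity theorem: on a compact (hence second countable and Baire) metric space without isolated points, transitivity yields a residual set of points whose forward orbit is dense. Fixing such a transitive point $x$, the key observation is that discarding any finite initial segment of its orbit leaves the orbit dense, because in a space with no isolated points every nonempty open set is infinite and so cannot be exhausted by finitely many orbit points. Consequently $\{n : f^{n}(x) \in V\}$ is infinite. Choosing any $i$ with $f^{i}(x) \in U$ and then infinitely many $j > i$ with $f^{j}(x) \in V$, one gets $f^{\,j-i}(U) \cap V \neq \emptyset$ with $j-i$ unbounded, so $N(U,V)$ is infinite.

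The main obstacle is the edge case in which $X$ has an isolated point, where the ``no isolated points'' hypothesis of the previous paragraph fails; this is the step I expect to require genuine care. Here I would argue that transitivity forces $X$ to be a single periodic orbit. If $p$ is isolated then $\{p\}$ is open, so transitivity makes the forward orbit of $p$ dense and makes $\bigcup_{n} f^{-n}(\{p\})$ dense; in particular $f^{-1}(\{p\})$ is nonempty, and by continuity of $f$ together with the openness of $\{p\}$ there is a nonempty open set $O$ on which $f$ is constantly equal to $p$. The dense orbit of $p$ must meet $O$, say $f^{k}(p) \in O$, whence $f^{k+1}(p) = p$ and $p$ is periodic; its dense orbit is then finite, so $X$ is finite. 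A finite transitive system is a single cyclic permutation, for which each $N(U,V)$ is a finite union of arithmetic progressions and hence infinite. Combining the two cases completes the proof; once the isolated-point subtlety is dispatched, the remaining assembly is routine and matches the author's assessment that the statement is essentially obvious.
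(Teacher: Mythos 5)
Your proof is correct, but there is nothing in the paper to compare it against: the authors introduce Lemma~\ref{Lem-Transitivity} with the single sentence ``The following lemma is obvious'' and give no proof whatsoever. Your write-up supplies precisely the content that makes the statement less than obvious. Two points of genuine care are involved: first, the paper's convention $\mathbb{Z}^{+}=\{0,1,2,\ldots\}$ allows $n=0$ in $N(U,V)$, so the naive trick of iterating transitivity ($W_{k+1}=W_k\cap f^{-n_k}(W_k)$) to manufacture ever larger return times can stall at $n_k=0$; second, the lemma is asserted for arbitrary compact metric $X$, so isolated points must be addressed. Your dichotomy handles both: if $X$ has an isolated point, transitivity forces $X$ to be a single finite cycle (where every $N(U,V)$ is a union of arithmetic progressions, hence infinite), and otherwise the Birkhoff transitivity theorem yields a transitive point whose orbit tails remain dense, giving unbounded return times. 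Two minor remarks: the ``no isolated points'' hypothesis is not actually needed for the Birkhoff theorem itself (second countability plus the Baire property already give a residual set of transitive points); and Baire category can be avoided entirely by an elementary argument in the perfect case --- any nonempty open set contains two disjoint nonempty open subsets, so transitivity always supplies a return time $\geq 1$, and iterating $U_{k+1}=U_k\cap f^{-m_k}(U_k)$ with $m_k\geq 1$ produces a strictly increasing sequence in $N(U,V)$. Either route is sound; yours is complete as written.
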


\begin{theorem}\label{Wu-th}
Let $(X, d)$ be a nontrivial metric space and $g\in D_{m}(I)$ satisfying that there exist
$z\in (0, 1]$ and $m\in \mathbb{N}$ such that $\xi_{g}(z)\neq z$ and $\xi_{g}^{m+1}(z)
=\xi_{g}^{m}(z)$. Then,
for any $f\in \mathscr{C}(X)$, its $g$-fuzzification system $(\mathbb{F}^{1}(X), \widetilde{f}_{g})$
is not transitive, where $\mathscr{C}(X)$ is the set of all continuous self-maps defined on $X$.
\end{theorem}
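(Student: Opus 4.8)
The plan is to exhibit a single pair of nonempty open sets $\mathcal{U}, \mathcal{V}\subset\mathbb{F}^{1}(X)$ with $(\widetilde{f}_{g})^{n}(\mathcal{U})\cap\mathcal{V}=\emptyset$ for every $n\in\mathbb{Z}^{+}$; by Lemma~\ref{Lem-Transitivity} (indeed by the mere definition of transitivity) this forces $\widetilde{f}_{g}$ to be non-transitive. The whole argument rests on the behaviour of the two levels $z$ and $w:=\xi_{g}^{m}(z)$. First I would record the purely arithmetic consequences of the hypothesis: since $\xi_{g}^{m+1}(z)=\xi_{g}^{m}(z)$ says exactly that $\xi_{g}(w)=w$, the point $w$ is a fixed point of $\xi_{g}$, so $\xi_{g}^{n}(w)=w$ for all $n$ and $\xi_{g}^{n}(z)=\xi_{g}^{\,n-m}(w)=w$ for all $n\geq m$; moreover $z\neq w$ (otherwise $\xi_{g}(z)=\xi_{g}(w)=w=z$, against $\xi_{g}(z)\neq z$), and $z,w\in(0,1]$.

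Combining this with Proposition~\ref{Proposition 3.1}, for every $A\in\mathbb{F}^{1}(X)$ and every $n\geq m$ one gets $[(\widetilde{f}_{g})^{n}(A)]_{z}=f^{n}([A]_{\xi_{g}^{n}(z)})=f^{n}([A]_{w})=[(\widetilde{f}_{g})^{n}(A)]_{w}$, so the $z$- and $w$-cuts of any image coincide. This suggests taking $\mathcal{V}$ to be the set of fuzzy sets whose $z$- and $w$-cuts are far apart. Fix two distinct points $p,q\in X$ (available since $X$ is nontrivial), put $r_{0}:=d(p,q)>0$, and set $\mathcal{V}:=\{B\in\mathbb{F}^{1}(X):d_{H}([B]_{z},[B]_{w})>r_{0}/2\}$. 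This $\mathcal{V}$ is open because $B\mapsto d_{H}([B]_{z},[B]_{w})$ is continuous: from the definition of $d_{\infty}$ one has $d_{H}([B]_{\alpha},[B']_{\alpha})\leq d_{\infty}(B,B')$ for each fixed $\alpha$, so each cut map is $1$-Lipschitz and the triangle inequality for $d_{H}$ finishes it. It is nonempty: placing value $1$ at $p$ and value $\min\{z,w\}$ at $q$ (and $0$ elsewhere) yields an upper semicontinuous $B_{*}\in\mathbb{F}^{1}(X)$ with $[B_{*}]_{\min\{z,w\}}=\{p,q\}$ and $[B_{*}]_{\max\{z,w\}}=\{p\}$, whence $d_{H}([B_{*}]_{z},[B_{*}]_{w})=r_{0}$. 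By the cut identity above, no image $(\widetilde{f}_{g})^{n}(A)$ with $n\geq m$ can lie in $\mathcal{V}$, for any choice of $\mathcal{U}$.

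It remains to rule out the finitely many indices $n=0,1,\dots,m-1$, and this is the only delicate point, since for small $n$ the images of arbitrary fuzzy sets are not controlled. Here I would anchor $\mathcal{U}$ at a crisp set: let $A_{0}=\chi_{\{p_{0}\}}$ for a chosen $p_{0}\in X$. By the second part of Proposition~\ref{Proposition 3.1}, every $\alpha$-cut of $(\widetilde{f}_{g})^{n}(A_{0})$ equals $f^{n}(\{p_{0}\})$, so $d_{H}([(\widetilde{f}_{g})^{n}(A_{0})]_{z},[(\widetilde{f}_{g})^{n}(A_{0})]_{w})=0$ and $(\widetilde{f}_{g})^{n}(A_{0})\notin\mathcal{V}$ for every $n$. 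Writing $\Phi(B):=d_{H}([B]_{z},[B]_{w})$, each map $\Phi\circ(\widetilde{f}_{g})^{n}$ is continuous (using the continuity of $\widetilde{f}_{g}$), so $\mathcal{U}_{n}:=(\Phi\circ(\widetilde{f}_{g})^{n})^{-1}([0,r_{0}/2))$ is an open neighbourhood of $A_{0}$. Taking $\mathcal{U}:=\bigcap_{n=0}^{m-1}\mathcal{U}_{n}$ gives a nonempty open set on which $(\widetilde{f}_{g})^{n}(A)\notin\mathcal{V}$ for all $n<m$. Together with the previous paragraph this yields $(\widetilde{f}_{g})^{n}(\mathcal{U})\cap\mathcal{V}=\emptyset$ for all $n\geq0$, and the proof is complete. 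The main obstacle, as indicated, is precisely the small-$n$ range: the level identity only bites for $n\geq m$, and the crisp anchor $A_{0}$ together with a finite intersection of continuity neighbourhoods is what closes that gap; everything else is the fixed-point bookkeeping for $\xi_{g}$ and the openness/nonemptiness of $\mathcal{V}$.
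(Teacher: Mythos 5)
Your proof is correct, and it rests on the same engine as the paper's --- Proposition~\ref{Proposition 3.1} together with the fact that $w=\xi_{g}^{m}(z)$ is a fixed point of $\xi_{g}$, so that for every $n\geq m$ the cuts of any image $(\widetilde{f}_{g})^{n}(A)$ at the two distinct levels $z$ and $w$ both equal $f^{n}([A]_{w})$ --- but your implementation is genuinely different. The paper fixes two small closed balls around distinct points, builds two piecewise constant fuzzy sets $E,G$ with the values $1$ and $z$ swapped on them, takes for $\mathcal{U},\mathcal{V}$ the $d_{\infty}$-balls of radius $\eta/4$ about $E$ and $G$, and reaches a contradiction by comparing image cuts at the levels $z$ and $\xi_{g}(z)$; this forces a case split ($\xi_{g}(z)>z$ versus $\xi_{g}(z)<z$), and since the times $n\leq m$ are not controlled, it must conclude via Lemma~\ref{Lem-Transitivity} from the finiteness of $N(\mathcal{U},\mathcal{V})$. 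You instead turn the cut-collapse into the definition of the target set: $\mathcal{V}=\{B\in\mathbb{F}^{1}(X):\Phi(B)>r_{0}/2\}$ with $\Phi(B)=d_{H}([B]_{z},[B]_{w})$ and $r_{0}=d(p,q)$, which is open because each level map is $1$-Lipschitz for $d_{\infty}$, nonempty by your two-point witness (this is where $z\neq w$, correctly deduced from $\xi_{g}(z)\neq z$, is needed), and disjoint from $(\widetilde{f}_{g})^{n}(\mathbb{F}^{1}(X))$ for every $n\geq m$ whatever $\mathcal{U}$ is; no case distinction occurs. The trade-off is in the endgame: the paper needs Lemma~\ref{Lem-Transitivity}, whose application to the non-compact, non-separable space $\mathbb{F}^{1}(X)$ is arguably the least justified step of the published argument, whereas you obtain the stronger statement $N(\mathcal{U},\mathcal{V})=\emptyset$ and need only the bare definition of transitivity --- but at the price of importing Kupka's theorem that $\widetilde{f}_{g}$ is continuous, used to shrink $\mathcal{U}$ around the crisp anchor $\chi_{\{p_{0}\}}$ for the finitely many times $n<m$. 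That import is legitimate (the paper cites the result) but also optional: with your $\mathcal{V}$ and an arbitrary nonempty open $\mathcal{U}$ one already has $N(\mathcal{U},\mathcal{V})\subset\{0,1,\ldots,m-1\}$, which is finite, and Lemma~\ref{Lem-Transitivity} then finishes exactly as in the paper; so your construction can be closed with either external ingredient, which makes it somewhat more flexible than the original.
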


\begin{proof}
Fix two distinct points $a, b\in X$, as $X$ is nontrivial. To prove
this theorem, consider two cases as follows:

\medskip

\textbf{Case 1.}\quad  $\xi_{g}(z)>z$. Since $\xi_{g}$ is nondecreasing, applying mathematical
induction, it follows that for any $j\in \mathbb{N}$,
\begin{equation}\label{E-1}
\xi_{g}^{j+1}(z)\geq \xi_{g}^{j}(z)\geq \xi_{g}(z)>z.
\end{equation}
Set
\[
E_{1}=\left\{x\in X: d(x, a)\leq \frac{d(a, b)}{8}\right\},
\]
and
\[
E_{2}=\left\{x\in X: d(x, b)\leq \frac{d(a, b)}{8}\right\}.
\]
Take two fuzzy sets $E, G\in \mathbb{F}^{1}(X)$ such that
\[
E(x)=\left\{\begin{array}{cc}
1, & {\rm } \ x\in E_{1}, \\\
z, & {\rm } \ x\in E_{2}, \\\
0, & {\rm } \ x\in X\setminus \left(E_{1}\cup E_{2}\right),
\end{array}
\right.
\]
and
\[
G(x)=\left\{\begin{array}{cc}
1, & {\rm } \ x\in E_{2}, \\\
z, & {\rm } \ x\in E_{1}, \\\
0, & {\rm } \ x\in X\setminus \left(E_{1}\cup E_{2}\right).
\end{array}
\right.
\]
Let
\[
\eta=\inf\left\{d(x, y): x\in E_{1}, \ y\in E_{2}\right\}\geq \frac{3}{4}d(a, b)
\]
and
\[
\mathcal{U}=\left\{F\in \mathbb{F}^{1}(X): d_{\infty}(F, E)<\frac{\eta}{4}\right\},
\]
\[
\mathcal{V}=\left\{F\in \mathbb{F}^{1}(X): d_{\infty}(F, G)<\frac{\eta}{4}\right\}.
\]
Since $d_{\infty}(E, G)\geq d_{H}([E]_{1}, [G]_{1})=d_{H}(E_{1}, E_{2})\geq \eta$, then
$\mathcal{U}\cap \mathcal{V}=\emptyset$.

Now, we claim that for any $n>m$, $(\widetilde{f}_{g})^{n}(\mathcal{U})\cap
\mathcal{V}=\emptyset$.

In fact, if there exist some $n> m$ such that $(\widetilde{f}_{g})^{n}(\mathcal{U})\cap
\mathcal{V}\neq\emptyset$, then there exists $P\in \mathcal{U}$ such that $(\widetilde{f}_{g})^{n}
(P)\in \mathcal{V}$. This implies that for any $\alpha\in (0, 1]$,
\[
d_{H}([(\widetilde{f}_{g})^{n}(P)]_{\alpha}, [G]_{\alpha})<\frac{\eta}{4}.
\]
In particular, applying \eqref{E-1} and Proposition \ref{Proposition 3.1}, it follows that
\[
d_{H}([(\widetilde{f}_{g})^{n}(P)]_{z}, [G]_{z})=d_{H}(\overline{f}^{n}([P]_{\xi_{g}^{n}(z)}),
E_{1} \cup E_{2})
<\frac{\eta}{4},
\]
and
\begin{eqnarray*}
d_{H}([(\widetilde{f}_{g})^{n}(P)]_{\xi_{g}(z)}, [G]_{\xi_{g}(z)})
&=&d_{H}(\overline{f}^{n}([P]_{\xi_{g}^{n+1}(z)}),
E_{2})\\
&=&d_{H}(\overline{f}^{n}([P]_{\xi_{g}^{n}(z)}),
E_{2})<\frac{\eta}{4}.
\end{eqnarray*}
So,
\begin{eqnarray*}
\frac{\eta}{2}& > & d_{H}(\overline{f}^{n}([P]_{\xi_{g}^{n}(z)}), E_{1} \cup E_{2})+ d_{H}(\overline{f}^{n}([P]_{\xi_{g}^{n}(z)}), E_{2})\\
& \geq & d_{H}(E_{1} \cup E_{2}, E_{2})=\max_{x\in E_{1} \cup E_{2}}\inf_{y\in E_{2}}d(x, y)\\
& = & \max_{x\in E_{1}}\inf_{y\in E_{2}}d(x, y)\geq \eta,
\end{eqnarray*}
which is a contradiction as $\eta>0$.

\medskip

\textbf{Case 2.}\quad  $\xi_{g}(z)<z$. Similarly to the proof of Case 1, it can be verified that
there exist nonempty open subsets $\mathcal{U}, \mathcal{V}$ of $\mathbb{F}^{1}(X)$ such that for any $n>m$, $(\widetilde{f}_{g})^{n}(\mathcal{U})\cap
\mathcal{V}=\emptyset$.

Summing up Case 1 and Case 2, applying Lemma \ref{Lem-Transitivity}, it follows that
$(\mathbb{F}^{1}(X), \widetilde{f}_{g})$ is not transitive.
\end{proof}

\begin{remark}\label{R-1}
Choose $g: I\longrightarrow I$ as
\[
g(x)=\begin{cases}
0, & {\rm } \ x=0, \\\
1-\frac{1}{2^{n}}, & {\rm } \ x\in [1-\frac{1}{2^{n}}, 1-\frac{1}{2^{n+1}}), n\in \mathbb{N}, \\\
1, & {\rm } \ x=1.
\end{cases}
\]
It can be verified that $g\in D_{m}(I)$, and
\[
\xi_g(x)=\begin{cases}
0, & {\rm } \ x=0, \\\
1-\frac{1}{2^{n+1}}, & {\rm } \ x\in (1-\frac{1}{2^{n}}, 1-\frac{1}{2^{n+1}}], n\in \mathbb{N}, \\\
1, & {\rm } \ x=1.
\end{cases}
\]
Clearly, $z=1/ 4$ satisfies that $\xi_{g}(z)=1/2 \neq z$, and $\xi_{g}^{n}(z)=1/2$ for all $n\geq 2$.
This, together with Theorem \ref{Wu-th}, implies that the answer to Question \ref{Question 1} is negative.
\end{remark}

\section{Conclusions}

In this paper, we present a systematic study of the sensitivity of $g$-fuzzification.
Firstly, we prove that $(\mathcal{K}(X), \overline{f})$ is sensitively dependent if
$(\mathbb{F}_{0}(X), \widetilde{f}_{g})$ is so (see Theorem \ref{Theorem 3.1}).
This, together with Example \ref{e-1-1}, gives a negative
answer to Question~\ref{Question 1} posed in \cite{Kupka2014}.
Then, we reveal some characteristics
ensuring that $(\mathcal{K}(X), \overline{f})$ is sensitive,
$\mathscr{F}$-sensitive, or multi-sensitive (see Theorem \ref{Theorem 1}, Theorem \ref{F-sensitivity},
and Theorem \ref{multi-sensitivity}, respectively).
Moreover, we show that $(\mathcal{K}(X), \overline{f})$ is
weakly mixing provided that $(\mathbb{F}^{1}(X), \widetilde{f}_{g})$ is transitive.
Finally, we prove that there exists $g\in D_{m}(I)$ such that for any dynamical system
$(X, f)$, $(\mathbb{F}^{1}(X), \widetilde{f}_{g})$ is not transitive (thus, not weakly mixing).

\section*{Acknowledgments}
We thank the referees for their careful reading and
valuable suggestions which helped us to improve the quality of this paper.

Xinxing Wu was by scientific research starting project
of Southwest Petroleum University (No. 2015QHZ029),  the Scientific Research
Fund of the Sichuan Provincial Education Department (No. 14ZB0007), and NSFC (No. 11401495).

Guanrong Chen was supported by the Hong Kong Research Grants Council under
GRF Grant CityU 11208515.

\bibliographystyle{amsplain}

\end{document}